\def\url@leostyle{%
  \@ifundefined{selectfont}{\def\UrlFont{\sf}}{\def\UrlFont{\small\ttfamily}}}
\def\abar{\bar{a}}
\def\bbar{\bar{b}}
\def\hbar{\bar{h}}
\def\vbar{\bar{v}}
\def\xbar{\bar{x}}
\def\ybar{\bar{y}}
\def\zbar{\bar{z}}
\def\Range{{\rm range}}
\def\phi{\varphi}
\def\FF{{\cal F}}
\def\RR{{\cal R}}
\def\OO{{\cal O}}
\def\C{\mathcal{C}}
\newcommand{\seq}[2][\omega]{\langle{#2}_i\rangle_{i\in {#1}}}
\newcommand{\D}[2]{\Delta_{{#1},{#2}}}
\newcommand{\bbd}[1]{\mathbb{#1}}
\title{Maximum VC families}
\author{Hunter Johnson{\footnote{hujohnson@jjay.cuny.edu}\hspace{.2cm}}
\\Department of Mathematics \& CS \\ John Jay College\\CUNY}
\begin{document}

\maketitle

\begin{abstract}
For a set $X$, a set system $\gc \sse 2^X$ with finite VC dimension is \textit{maximum} if it has the largest size allowable by Sauer's Lemma.  There is a natural association between set systems and parameterized formulas in first order model theory, where the set system associated to a formula is known as a ``definable family.''  Merging the two points of view, we may consider set systems which are maximum in the sense of computational learning theory, and stable in the sense of model theory.  We show that all stable maximum families are of the form $\gc \sse \{A \Delta B: A \in [X]^{\leq n}\}$ for some $n \in \omega$ and $B \sse X$.  We also examine maximum and non-maximum semi-algebraic families, give a model-completeness result generalizing the model completeness of $\langle \bbd{Q},<\rangle,$ and demonstrate that maximum families have UDTFS.  
\end{abstract}

\section{Introduction} \label{S:S1}
This paper is about set systems, in particular those arising as definable families in certain formal structures.  By ``family,'' we mean a collection of similar objects, such as circles, triangles, or other concepts, defined and parameterized by a single first-order formula.  The ``similarity'' of objects in a given family (or set system) is captured by its Vapnik-Chervonenkis (VC) dimension, defined below.  Only set systems with finite VC dimension (so-called VC classes) will be considered. 

In this paper we explore the model-theoretic properties of definable families which are maximum and/or maximal.  These terms refer to the largest set systems of a given VC dimension, where ``largest'' is measured in two distinct ways.  The maximal classes are apparently somewhat ad hoc, but maximum classes have a smoothness which gives strong structural properties.  These objects have long been of interest to researchers working in the field of Computational Learning Theory (CLT), where they have applications to PAC learning.\footnote{A reader wishing to review this literature might first read \cite{FlWa95} and then skim \cite{Fl89,BeLi98,KuWa07,RuRu09}. } 


The paper is organized as follows.  In Section \ref{S:S1} we give the main definitions, and explore the properties of set systems in the absence of any syntax or semantics.  Section \ref{S:S2} introduces model theory, and gives some of the main results, such as Proposition \ref{P:P4} and Theorem \ref{T:T4}.  Section \ref{S:S3} gives a theorem of Floyd, which shows that linearly parameterized semi-algebraic sets are maximum.  We then demonstrate that not all semi-algebraic families are as nicely behaved.  In Section \ref{S:S4} we give a model completeness condition related to the maximum property, which generalizes the model completeness of $\langle \bbd{Q},< \rangle$.  Section \ref{S:S5} defines UDTFS and shows its relation to the ``compression scheme'' notion from CLT.  We then illustrate how a compression scheme can be translated into a first-order condition, namely UDTFS.  In this section we also state some open problems, and prove the equivalence of two of them.  

Most stability-theoretic definitions are from Shelah \cite{Sh90}.  The Vapnik-Chervonenkis dimension was defined in \cite{VaCh71}.  A non-logician who wishes to read this paper might consult chapters 1,2 and 5 of \cite{Mar00}, and chapters 1 and 2 of \cite{Sh90}.

\subsection{}
A formula $\phi(v_1,v_2,\ldots,v_k)$ is said to be \textit{partitioned} if the free variables $v_1,\ldots,v_k$ are partitioned into parts $\xbar = v_{i_1},\ldots,v_{i_m} $ and $\ybar = v_{j_1}, \ldots,v_{j_{k-m}}$.  In this case we write $\phi(v_1,v_2,\ldots,v_k)=\phi(\xbar;\ybar)$.  Recall that as an ordinal, $n=\{0,1,\ldots,n-1\}$.

\begin{definition}
 For any partitioned formula $\phi(\xbar;\ybar)$, $n \in \omega$, and $\eta:n \rightarrow 2$, define $\phi_\eta(\ybar_1,\ldots,\ybar_n)=\exists \xbar  \bigwedge_{i\in n}\phi(\xbar;\ybar_i)^{\eta(i)}$, where $\phi(\xbar;\ybar)^1 := \phi(\xbar;\ybar)$ and $\phi(\xbar;\ybar)^0 := \neg \phi(\xbar;\ybar)$.
\end{definition}

\begin{definition}
 For any partitioned formula $\phi(\xbar;\ybar)$, define $\D{\phi}{n}=\{\phi_\eta(\ybar_1,\ldots,\ybar_n):\eta:n\rightarrow 2\}$.
\end{definition}

\begin{definition}
 Say that a sequence $\seq[I]{\abar}$ is $\D{\phi}{n}$-indiscernible if for any $i_1 < \cdots < i_n$ and $j_1 < \cdots < j_n$ from $I$, $tp_{\D{\phi}{n}}(\abar_{i_1},\ldots,\abar_{i_n})=tp_{\D{\phi}{n}}(\abar_{j_1},\ldots,\abar_{j_n})$.
\end{definition}

\begin{definition}
 Let $X$ a set, $A \sse X$, and $\mathcal{C} \sse 2^X$.  Define $\mathcal{C}(A) = \{c\cap A: c\in \mathcal{C}\}$.  Say that $\mathcal{C}$ \textit{shatters} $A$ if $\mathcal{C}(A)=2^A$.  Let the VC dimension of $\mathcal{C}$, denoted VC$(\mathcal{C})$, be defined as $\sup\{|A|: A \sse X, \mathcal{C} \text{ shatters } A\}$.
\end{definition}

For $n \in \omega$, $d \in \omega$, define $\Phi_d(n) = \sum_{i=0}^d {\binom{n}{i}}$.

\begin{lemma}[Sauer's Lemma \cite{Sa72,Sh72,VaCh71}]
 Suppose $\mathcal{C} \sse 2^X$ for a set $X$. If VC$(\mathcal{C}) = d$, and $A \sse X$ is finite, then $$|\mathcal{C}(A)| \leq \Phi_d(|A|)$$
\end{lemma}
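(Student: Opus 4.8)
The plan is to reduce to the finite case and then argue by induction. Since $\mathcal{C}(A)$ depends only on the traces of members of $\mathcal{C}$ on $A$, and a subset $S \subseteq A$ is shattered by $\mathcal{C}$ if and only if it is shattered by the finite family $\mathcal{D} := \mathcal{C}(A) \subseteq 2^A$, I may replace $\mathcal{C}$ by $\mathcal{D}$ and assume $X = A$ is finite with $|A| = m$; note $\mathrm{VC}(\mathcal{D}) \leq d$. The goal becomes: any $\mathcal{D} \subseteq 2^A$ with $\mathrm{VC}(\mathcal{D}) \leq d$ satisfies $|\mathcal{D}| \leq \Phi_d(m)$. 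I would prove this by induction on $m$, keeping the statement uniform in $d$ so that the two sub-families produced at each step can be fed back into the hypothesis with different parameters.

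For the inductive step, fix an element $x \in A$ and set $A' = A \setminus \{x\}$. I would split $\mathcal{D}$ into two families over $A'$: the projection $\mathcal{D}_0 = \{c \cap A' : c \in \mathcal{D}\}$, and the ``doubled'' family $\mathcal{D}_1 = \{c \subseteq A' : c \in \mathcal{D} \text{ and } c \cup \{x\} \in \mathcal{D}\}$, consisting of those $c \subseteq A'$ for which both $c$ and $c \cup \{x\}$ lie in $\mathcal{D}$. The first key observation is the exact count $|\mathcal{D}| = |\mathcal{D}_0| + |\mathcal{D}_1|$: every $c \subseteq A'$ is the trace of either one or two members of $\mathcal{D}$, contributing $1$ to $|\mathcal{D}_0|$ in either case and an additional $1$ to $|\mathcal{D}_1|$ exactly when two members project onto it.

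The second, and crucial, ingredient is to bound the VC dimensions of the pieces: $\mathrm{VC}(\mathcal{D}_0) \leq d$, since a projection cannot increase VC dimension, and $\mathrm{VC}(\mathcal{D}_1) \leq d - 1$. The latter is the heart of the argument: if $\mathcal{D}_1$ shatters some $S \subseteq A'$, then $\mathcal{D}$ shatters $S \cup \{x\}$, because any prescribed trace on $S \cup \{x\}$ can be obtained by first realizing its restriction to $S$ through some $c \in \mathcal{D}_1$ and then choosing between $c$ and $c \cup \{x\}$ (both in $\mathcal{D}$) to include or omit $x$; hence $|S| + 1 \leq d$. Applying the induction hypothesis over the $(m-1)$-element set $A'$ gives $|\mathcal{D}_0| \leq \Phi_d(m-1)$ and $|\mathcal{D}_1| \leq \Phi_{d-1}(m-1)$, and Pascal's identity yields $\Phi_d(m-1) + \Phi_{d-1}(m-1) = \Phi_d(m)$, closing the step. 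The base cases $m = 0$ (where $|\mathcal{D}| \leq 1 = \Phi_d(0)$) and $d = 0$ (where no singleton being shattered forces $|\mathcal{D}| \leq 1$) are immediate.

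I expect the only delicate point to be verifying $\mathrm{VC}(\mathcal{D}_1) \leq d-1$ via this ``lift by $x$'' argument; the counting identity and the Pascal computation are routine bookkeeping. A cleaner alternative, should I want to avoid the double bookkeeping in $d$ and $m$, is to prove the sharper Pajor-style statement that the number of subsets of $A$ shattered by $\mathcal{D}$ is at least $|\mathcal{D}|$; the same split establishes this by induction, and since every shattered set has size at most $d$, their number is bounded by $\sum_{i=0}^{d}\binom{m}{i} = \Phi_d(m)$, giving the lemma at once.
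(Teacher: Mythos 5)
The paper offers no proof of this lemma at all: it is stated as a classical result and cited to Sauer, Shelah, and Vapnik--Chervonenkis, so there is no in-paper argument to compare yours against. Your proof is correct and is in fact the standard inductive argument behind those citations. The reduction to the finite trace family $\mathcal{D} = \mathcal{C}(A)$ is sound (any $S \subseteq A$ shattered by $\mathcal{D}$ is shattered by $\mathcal{C}$, so $\mathrm{VC}(\mathcal{D}) \leq d$); the exact count $|\mathcal{D}| = |\mathcal{D}_0| + |\mathcal{D}_1|$ holds because each trace on $A' = A \setminus \{x\}$ lifts to either one or two members of $\mathcal{D}$; the crux, $\mathrm{VC}(\mathcal{D}_1) \leq d-1$, is handled correctly by your lift-by-$x$ argument, since shattering of $S$ by $\mathcal{D}_1$ gives shattering of $S \cup \{x\}$ by $\mathcal{D}$ (both $c$ and $c \cup \{x\}$ being available in $\mathcal{D}$ for every required trace on $S$); and Pascal's identity $\Phi_d(m-1) + \Phi_{d-1}(m-1) = \Phi_d(m)$ together with your base cases $m=0$ and $d=0$ closes the induction. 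The Pajor-style alternative you sketch (the family shatters at least $|\mathcal{D}|$ sets, each of size at most $d$) is also a valid and arguably cleaner route. One presentational note: since the paper defines $\mathrm{VC}(\mathcal{C})$ as a supremum and the statement assumes $\mathrm{VC}(\mathcal{C}) = d$ exactly, your uniformity in $d$ (proving the bound for all families of VC dimension \emph{at most} $d$) is exactly the right strengthening to make the induction go through, and is worth stating explicitly as you did.
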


\begin{definition}[\cite{We87}]
 Suppose $\mathcal{C} \sse 2^X$ and VC$(\mathcal{C})=d$.  Say that $\mathcal{C}$ is \textit{maximum} of VC-dimension $d$ (or $d$-maximum) if for all finite $A \sse X$, $$|\mathcal{C}(A)| = \Phi_d(|A|)$$
\end{definition}


\begin{definition}[\cite{Du99}]
 Suppose $\mathcal{C} \sse 2^X$ and VC$(\mathcal{C})=d$.  Say that $\mathcal{C}$ is \textit{maximal} of VC-dimension $d$ (or $d$-maximal) if for any $c \in 2^X \setminus \mathcal{C}$, VC$(\mathcal{C} \cup \{c\}) = d+1$.
\end{definition}

\begin{example} $\mathcal{C} = [X]^{\leq d}$ for any $X$ with $|X|\geq d$ is $d$-maximal, where $[X]^{\leq d} := \{A \sse X: |A|\leq d\}$.  It is also $d$-maximum.
 
\end{example}

\begin{example}
 Let $X=\{0,1,2,3\}$, $d=2$, and build a suitable $\mathcal{C}$.  It is not hard to construct examples which are maximum and maximal, or maximal but not maximum.  
\end{example}

Any $\mathcal{C}$ is easily seen to have a maximal (though perhaps not a maximum) superclass $\ga \supseteq \mathcal{C}$, with VC$(\ga)=$VC($\mathcal{C}$), by Zorn's Lemma \cite{Du99}.

\begin{proposition} \label{P:P0}
Let $\gc \sse 2^X$ for a set $X$ and suppose $\mathcal{C}(A)$ is $d$-maximal for every finite $A \sse X$.  Let $\ga$ be any $d$-maximal superclass of $\gc$.  Then $\ga$ is the closure of $\mathcal{C}$ in the Tychonoff topology on $2^X$, and in particular is unique.
\end{proposition}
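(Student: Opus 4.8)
The plan is to identify $\ga$ with the topological closure $\overline{\mathcal{C}}$ directly, proving the two inclusions separately. First I would record the standard description of the closure in the Tychonoff topology on $2^X$: since the basic clopen neighborhoods of a point $c$ are obtained by fixing the restriction $c \cap A$ for a finite $A \sse X$, we have $c \in \overline{\mathcal{C}}$ if and only if $c \cap A \in \mathcal{C}(A)$ for every finite $A \sse X$. Both inclusions then reduce to finitary shattering arguments, each invoking one of the two maximality hypotheses, and no model theory is needed.

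For $\ga \sse \overline{\mathcal{C}}$, I would argue the contrapositive using the hypothesis that each $\mathcal{C}(A)$ is $d$-maximal. Fix $c \in \ga$ and suppose $c \cap A \notin \mathcal{C}(A)$ for some finite $A$. Then $d$-maximality of $\mathcal{C}(A)$ gives VC$(\mathcal{C}(A) \cup \{c \cap A\}) = d+1$, so some $B \sse A$ with $|B| = d+1$ is shattered by $\mathcal{C}(A) \cup \{c \cap A\}$. Restricting along $B \sse A$, this system induces exactly $\mathcal{C}(B) \cup \{c \cap B\}$ on $B$, so $\mathcal{C}(B) \cup \{c \cap B\} = 2^B$. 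Since $c \in \ga$ and $\mathcal{C} \sse \ga$, the class $\ga$ then shatters $B$, contradicting VC$(\ga) = d$. Hence $c \cap A \in \mathcal{C}(A)$ for every finite $A$, i.e.\ $c \in \overline{\mathcal{C}}$; equivalently, $\ga(A) = \mathcal{C}(A)$ for all finite $A$.

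For the reverse inclusion $\overline{\mathcal{C}} \sse \ga$, I would use only the $d$-maximality of $\ga$. If $c \in \overline{\mathcal{C}} \setminus \ga$, then VC$(\ga \cup \{c\}) = d+1$, witnessed by some $B$ with $|B| = d+1$; as VC$(\ga) = d$, the class $\ga$ alone does not shatter $B$, so necessarily $c \cap B \notin \ga(B)$. But $c \in \overline{\mathcal{C}}$ forces $c \cap B \in \mathcal{C}(B) \sse \ga(B)$, a contradiction. Combining the two inclusions gives $\ga = \overline{\mathcal{C}}$, and since the right-hand side is manifestly independent of the choice of maximal superclass, uniqueness follows at once.

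Each step is individually short; the thing to get right is the bookkeeping connecting restrictions along the chain $B \sse A \sse X$, namely that the trace of $\mathcal{C}(A)$ on $B$ is again $\mathcal{C}(B)$ and that a set of size $d+1$ shattered by a system on $A$ remains genuinely shattered after restricting to $B$. I expect the only real subtlety to be ensuring that the $(d+1)$-element shattered set produced by the finitary maximality hypothesis lies \emph{inside} the finite set $A$ at hand, so that the resulting contradiction with VC$(\ga)=d$ is legitimate; once that is arranged the argument is purely combinatorial.
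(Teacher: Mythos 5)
Your proof is correct and takes essentially the same approach as the paper's: identify $\ga$ with the closure of $\gc$ in the Tychonoff topology by proving both inclusions, using $d$-maximality of the finite traces $\gc(A)$ to get $\ga \sse \overline{\gc}$, and $d$-maximality of $\ga$ (in effect, that maximal classes are closed) to get $\overline{\gc} \sse \ga$. The only difference is one of detail: you unwind, via explicit shattering arguments on a $(d+1)$-element set, the two steps the paper treats as immediate (``$f\upharpoonright X_0 \in \gc(X_0)$'' and ``maximal implies closed'').
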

\begin{proof}
 Let $X_0 \sse X$ finite and $f:X_0\rightarrow 2$.  We regard $2^X$ as ${}^X2$ and take sets of the form $\{g:X \rightarrow 2 \mid g \supseteq f\}$ as a basis for the topology.  Let $\bar{\gc}$ be the closure of $\gc$.  We must show $\bar{\gc} = \ga$.  Let $f \in \ga$.  For any finite $X_0 \sse X$, let $\OO_{f,X_0} = \{g \mid g \supseteq f\upharpoonright X_0\}$.  Since $\gc(X_0)$ is $d$-maximal, $f\upharpoonright X_0 \in \gc(X_0)$.  Then $\OO_{f,X_0} \cap \gc \neq \emptyset$.  This shows $f \in \bar{\gc}$.  It is easy to see that since $\ga$ is maximal, it must be closed.  Therefore $\ga=\bar{\gc}$.
\end{proof}
\begin{corollary}
 Let $\gc \sse 2^X$ for a set $X$ and suppose $\mathcal{C}$ is $d$-maximum.  Then $\bar{\gc}$ is the unique $d$-maximal superclass of $\gc$.
\end{corollary}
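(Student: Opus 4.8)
The plan is to reduce everything to Proposition \ref{P:P0}. That proposition applies to any $\gc$ whose finite restrictions $\gc(A)$ are all $d$-maximal, so the only real work is to extract this hypothesis from the (global) maximum assumption; after that the corollary follows by combining Proposition \ref{P:P0} with the Zorn's Lemma remark preceding it. First I would fix a finite $A \sse X$ with $|A| \geq d$ and note that $\gc(A) \sse 2^A$ has VC dimension exactly $d$: it is at most $d$ since $\mathrm{VC}(\gc(A)) \leq \mathrm{VC}(\gc) = d$, and it is not smaller because Sauer's Lemma would otherwise force $|\gc(A)| \leq \Phi_{d-1}(|A|) < \Phi_d(|A|)$, contradicting $|\gc(A)| = \Phi_d(|A|)$. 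For $|A| < d$ we simply have $\gc(A) = 2^A$, and the maximality requirement is vacuous there since no $c \in 2^A \setminus \gc(A)$ exists.

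The heart of the argument is the finitary claim that a finite $d$-maximum family is $d$-maximal. So I would take any $c \in 2^A \setminus \gc(A)$ (with $|A| \geq d$) and observe $|\gc(A) \cup \{c\}| = \Phi_d(|A|) + 1 > \Phi_d(|A|)$; by the contrapositive of Sauer's Lemma this forces $\mathrm{VC}(\gc(A) \cup \{c\}) > d$. Conversely, adjoining a single set can raise the VC dimension by at most one: if $\gc(A) \cup \{c\}$ shattered some $S \sse A$ with $|S| = d+2$, then the trace $\gc(S) \cup \{c \cap S\}$ would be all of $2^S$, so $|\gc(S)| \geq 2^{d+2} - 1$, contradicting the Sauer bound $|\gc(S)| \leq \Phi_d(d+2) = 2^{d+2} - d - 3 < 2^{d+2} - 1$. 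Hence $\mathrm{VC}(\gc(A) \cup \{c\}) = d+1$, which is exactly the statement that $\gc(A)$ is $d$-maximal.

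Having verified the hypothesis of Proposition \ref{P:P0} for every finite $A \sse X$, that proposition tells us that any $d$-maximal superclass of $\gc$ must equal $\bar{\gc}$, giving uniqueness. For existence I would invoke the remark before the proposition: by Zorn's Lemma $\gc$ admits a maximal superclass $\ga$ with $\mathrm{VC}(\ga) = \mathrm{VC}(\gc) = d$, i.e. a $d$-maximal superclass; since it must coincide with $\bar{\gc}$, we conclude that $\bar{\gc}$ is itself $d$-maximal and is the unique such superclass. I expect the main (and only nontrivial) obstacle to be the finitary lemma of the second paragraph, where Sauer's Lemma does all the work in both directions; everything else is bookkeeping, with the sole point to watch being the small-$A$ edge case, handled above by the vacuous-maximality observation.
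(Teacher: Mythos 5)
Your proposal is correct and follows essentially the route the paper intends: the corollary is meant to be immediate from Proposition \ref{P:P0} once one notes that each finite trace $\gc(A)$ of a $d$-maximum class is $d$-maximum and that maximum implies maximal on finite domains (a fact the paper states as "easily seen"), with existence supplied by the Zorn's Lemma remark. Your second paragraph simply fills in the Sauer's Lemma details behind that "easily seen" step, and it does so correctly, including the edge case $|A|<d$.
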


That $d$-maximum classes have unique $d$-maximal superclasses was first shown in Floyd's thesis \cite{Fl89}.  That maximal classes are closed in the Tychonoff topology was observed by Dudley \cite{Du99}.
\section{}  \label{S:S2}

Given a partitioned formula $\phi(\xbar;\ybar)$, let $\phi^*(\ybar;\xbar)=\phi(\xbar;\ybar)$.  For a monster model $\mcm$, model $\mcn$ and $B \sse M^{|\xbar|}$, let $\mathcal{C}_\phi(B)^\mcn=\{\phi(B,\bbar): \bbar \in N^{|\ybar|}\}$. We let $\mathcal{C}_\phi(B)$ where no model is specified implicitly denote $\mathcal{C}_\phi(B)^\mcm$.  The shorthand $\mathcal{C}_\phi(\mcm)$ will be used for $\mathcal{C}_\phi(M^{|\xbar|})^\mcm$.

\begin{definition}
 Say that a partitioned formula $\phi(\xbar;\ybar)$ is $d$-maximum (maximal) in $\mcm$ if $\mathcal{C}_\phi(\mcm)$ is $d$-maximum (maximal).  Say that a partitioned formula is $d$-*maximum (maximal) in $\mcm$ if $\mathcal{C}_{\phi^*}(\mcm)$ is $d$-maximum (maximal).
\end{definition}

Whether a formula is maximum depends only on the theory, since maximum-ness can be expressed as a set of first order sentences.  For maximal families, on the other hand, consider the $\{<\}$-formula $\phi(x;y,z,w)$ which expresses the relation
\begin{equation}
 \begin{cases}
  x < y &\text{ if } \hspace{.5cm}   y=z \wedge z<w \\
  x \leq y &\text{ if } \hspace{.5cm}   y=z \wedge z>w \\
  x=x &\text{ if }  \hspace{.5cm} y<z \\
  x\neq x &\text{ if }  \hspace{.5cm} y>z\\
 \end{cases}
\end{equation}

 We claim that this formula is 1-maximal in $\bbd{R}$ but not in $\bbd{Q}$.  Note that the formula encodes all left cuts $x<y$ including the ``limit cuts'' $x\leq y$, $x\neq x$, and $x=x$. The associated family is not maximal in $\bbd{Q}$ because $\gc_\phi(\bbd{Q})^\bbd{Q}$ does not include irrational cuts (which clearly do not increase the VC dimension).  It is maximal in $\R$, since that structure is Dedekind complete.  More precisely, $\gc_\phi(\R)^\R$ is closed in (ie. equals) $\gc_\phi(\R)$.  The claim then follows by Proposition \ref{P:P0}.

Since $\phi(x;y,z,w)$ is also maximum, this shows that maximum does not imply maximal on an infinite domain (see also \cite{Fl89,FlWa95}).  On any finite domain, however, maximum is easily seen to imply maximal.  If $\bbd{F}$ is a field and $k \sse \bbd{F}$ is a proper subfield, then the formula $\phi(x;y_1,\ldots,y_d) = ((x-y_1)(x-y_2)\cdots(x-y_d) = 0)$ gives $\mathcal{C}_\phi(k)^\bbd{F}$ $d$-maximum and $d$-maximal.  However $\mathcal{C}_\phi(\bbd{F})$ is only maximum (because it is ``missing'' the empty set).

\begin{example}
 Let $\phi(x;y_1,\ldots,y_{2m})$ be the $L=\{<\}$ formula $$\bigvee_{i \in [m]} (y_{2i-1} < x < y_{2i})$$ for some $m \in \omega$. Then in any infinite linear order, $\phi$ is maximum of VC dimension $2m$.\footnote{Unions of intervals have long been known to be maximum \cite{Fl89}.  An inductive proof is straightforward.}
\end{example}

  Let $\mathcal{C} \sse 2^X$ be $d$-maximum.  For any $A \sse X$ with $|A|=d+1$, $|\mathcal{C}(A)|=\Phi_d(d+1)=2^{d+1}-1$.  Let the unique $A^* \in 2^A \setminus \mathcal{C}(A)$ be called the \textit{forbidden label} for $\mathcal{C}$ on $A$ (Floyd's thesis, section 3.4).  

\begin{example}
 Let $X$ an infinite set, $d \in \omega$ and $\mathcal{C} = [X]^d$.  Then for any $A \sse X$ of cardinality $d+1$, the forbidden label for $\mathcal{C}$ on $A$ is $A^* = A$.
\end{example}

\begin{example}
 Let $X = \bbd{Q}$ and $\mathcal{C} = \mathcal{C}_{x<y}(\bbd{Q})$.  Then for $\{a,b\} \sse \bbd{Q}$ with $a < b$, the forbidden label for $\mathcal{C}$ on $\{a,b\}$ is $\{b\}$.
\end{example}

When $X$ has an ordering, a forbidden label can be naturally represented by a length $d+1$ binary string.  In the first example, above, the missing label can be viewed as $\overbrace{111\cdots11}^{d+1}$, and in the second as $01$.  We can connect forbidden labels to model theory as follows.

\begin{proposition} \label{P:P2}
 Suppose that $\phi(\xbar;\ybar)$ is a $d$-maximum formula.  Then for any $\abar_0,\ldots,\abar_{d} \in M^{|\ybar|}$, there is a unique $\eta^*:d+1 \rightarrow 2$ such that $\models \neg \phi_{\eta^*}(\abar_0,\ldots,\abar_{d}) \wedge \bigwedge_{\eta \neq \eta^*}\phi_{\eta}(\abar_0,\ldots,\abar_{d})$. Moreover $\{\abar_j : \eta^*(j)=1\}$ is the forbidden label of $\mathcal{C}_\phi(\mcm)$ on $\{\abar_0,\ldots,\abar_{d}\}$.
\end{proposition}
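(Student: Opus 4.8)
The plan is to obtain the conclusion from a single application of Sauer's Lemma at the threshold $n=d+1$, where the maximum hypothesis makes the bound tight and thereby pins down a unique omitted label. Set $A=\{\abar_0,\ldots,\abar_d\}$ and take the $\abar_j$ pairwise distinct, so that $|A|=d+1$; this is the meaningful case, and distinctness is genuinely needed, since if, say, $\abar_0=\abar_1$ then every $\eta$ with $\eta(0)\neq\eta(1)$ fails and there is no \emph{unique} offending $\eta^*$. The first thing I would record is the combinatorial identity $\Phi_d(d+1)=\sum_{i=0}^{d}\binom{d+1}{i}=2^{d+1}-1$, i.e.\ the Sauer bound at $n=d+1$ falls exactly one short of the full power set $2^{A}$.

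The key step is to unwind the definitions of $\phi_\eta$ and of $\mathcal{C}_\phi(\mcm)$ and check that the assignment $\eta\mapsto\{\abar_j:\eta(j)=1\}$ is a bijection from the set of labelings $\eta\colon d+1\to 2$ with $\models\phi_\eta(\abar_0,\ldots,\abar_d)$ onto the restriction $\mathcal{C}_\phi(\mcm)(A)$. Indeed, $\models\phi_\eta(\abar_0,\ldots,\abar_d)$ asserts the existence of a single witness $\cbar$ with $\models\phi(\cbar;\abar_j)^{\eta(j)}$ for all $j$; the subset of $A$ that such a $\cbar$ cuts out, namely $\{\abar_j:\models\phi(\cbar;\abar_j)\}$, is precisely $\{\abar_j:\eta(j)=1\}$, so the satisfiable $\eta$ correspond exactly to the subsets of $A$ realized by $\mathcal{C}_\phi(\mcm)$. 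Distinctness of the $\abar_j$ makes the assignment injective, and the unwinding makes it onto, so the number of satisfiable $\eta$ equals $|\mathcal{C}_\phi(\mcm)(A)|$.

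Now I apply the hypothesis that $\phi$ is $d$-maximum to the $(d+1)$-element set $A$: the definition of maximum gives $|\mathcal{C}_\phi(\mcm)(A)|=\Phi_d(d+1)=2^{d+1}-1$. Since $\mathcal{C}_\phi(\mcm)(A)\sse 2^{A}$ and $|2^{A}|=2^{d+1}$, exactly one subset $A^*\sse A$ is absent from the restriction. Transporting this back through the bijection yields a unique $\eta^*$, the characteristic function of $A^*$, with $\models\neg\phi_{\eta^*}(\abar_0,\ldots,\abar_d)$ and $\models\phi_\eta(\abar_0,\ldots,\abar_d)$ for every $\eta\neq\eta^*$; this is the first assertion. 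By the definition of the forbidden label, the missing set $A^*=\{\abar_j:\eta^*(j)=1\}$ is the forbidden label of $\mathcal{C}_\phi(\mcm)$ on $A$, which is the second assertion.

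Essentially all the content is concentrated in the bijection of the second paragraph, where one must correctly match the single existential witness quantified in $\phi_\eta$ with membership patterns realized on $A$; once that translation is fixed, the counting is forced and nothing further is required. The only other point demanding care is the distinctness of $\abar_0,\ldots,\abar_d$, since it is exactly distinctness that guarantees $2^{d+1}$ candidate labels and hence that the single omission, and therefore $\eta^*$, is unique.
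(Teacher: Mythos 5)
Your proof is correct and is essentially the argument the paper intends: the paper states Proposition \ref{P:P2} without any proof, treating it as an immediate consequence of the preceding paragraph defining forbidden labels (where maximum-ness gives $|\mathcal{C}(A)|=\Phi_d(d+1)=2^{d+1}-1$ on any $(d+1)$-element set $A$), and your bijection between the satisfiable labelings $\eta$ and the subsets of $A$ cut out by witnesses, followed by exactly that count, is just this implicit argument written out in full. Your insistence that $\abar_0,\ldots,\abar_d$ be pairwise distinct is a legitimate reading of a hypothesis the statement leaves implicit (without it uniqueness of $\eta^*$ indeed fails, as you show), and the paper itself only ever applies the proposition to distinct tuples, e.g.\ in Lemma \ref{L:L1.5}.
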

We will call $\eta^*$ in Proposition \ref{P:P2} the forbidden label of $\phi(\xbar;\ybar)$ on $\abar_0,\ldots,\abar_{d}$.

\begin{proposition}[Floyd \cite{Fl89}] \label{P:P3}
 Suppose $\mathcal{C} \sse 2^X$ is $d$-maximum and $d$-maximal.  For each $A \in [X]^{d+1}$ let $A^*$ denote the forbidden label for $\mathcal{C}$ on $A$. Then for any $c \in 2^X$, $c \in \mathcal{C} \iff \forall A \in [X]^{d+1} (c\cap A \neq A^*)$.
\end{proposition}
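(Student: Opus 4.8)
The plan is to prove the forward implication directly from the definition of the forbidden label, and the reverse implication by contraposition, using the maximum and maximal hypotheses in tandem.

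For the forward direction, suppose $c \in \mathcal{C}$ and fix any $A \in [X]^{d+1}$. By definition $c \cap A \in \mathcal{C}(A)$, whereas $A^*$ is the unique element of $2^A \setminus \mathcal{C}(A)$; hence $c \cap A \neq A^*$. Since $A$ was arbitrary, this yields the right-hand side, and no hypothesis beyond the definition of $A^*$ is needed here.

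For the reverse direction I would argue contrapositively: assuming $c \notin \mathcal{C}$, I will produce some $B \in [X]^{d+1}$ with $c \cap B = B^*$. Because $\mathcal{C}$ is $d$-maximal, adjoining $c$ raises the VC dimension, so VC$(\mathcal{C} \cup \{c\}) = d+1$. Since this supremum is finite and attained, there is a set $B$ of size exactly $d+1$ shattered by $\mathcal{C} \cup \{c\}$; that is, $(\mathcal{C} \cup \{c\})(B) = \mathcal{C}(B) \cup \{c \cap B\} = 2^B$.

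It then remains to pin down $c \cap B$. Since VC$(\mathcal{C}) = d < d+1 = |B|$, the class $\mathcal{C}$ cannot shatter $B$, so $\mathcal{C}(B) \subsetneq 2^B$. The maximum hypothesis forces $|\mathcal{C}(B)| = \Phi_d(d+1) = 2^{d+1} - 1$, so $\mathcal{C}(B)$ omits exactly one subset of $B$, which is precisely $B^*$. For the union $\mathcal{C}(B) \cup \{c \cap B\}$ to exhaust $2^B$, the single new label $c \cap B$ must equal this omitted subset, so $c \cap B = B^*$. This contradicts the hypothesis, completing the argument. The structure is a clean synthesis rather than a hard computation: maximality supplies a witnessing $(d+1)$-set $B$ on which $c$ is forced to complete a shattering, and maximum-ness guarantees that the unique gap $\mathcal{C}$ leaves on $B$ is exactly the forbidden label $B^*$. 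The only point demanding care is the observation that a VC dimension of $d+1$ is realized on a set of size exactly $d+1$, rather than merely at most $d+1$, which is what licenses evaluating $\Phi_d$ at the single value $d+1$; this is immediate from the definition of VC dimension as a supremum of sizes of shattered sets.
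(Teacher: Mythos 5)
Your proof is correct and is essentially the paper's argument: the paper's right-to-left direction (a $c$ avoiding all forbidden labels cannot raise the VC dimension, hence lies in $\mathcal{C}$ by maximality) is exactly the contrapositive of your reasoning, with maximality and maximum-ness playing the same roles in both. You simply spell out the step the paper leaves implicit, namely that a shattered $(d+1)$-set would force $c\cap B$ to equal the unique label $B^*$ missing from $\mathcal{C}(B)$.
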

\begin{proof}
 Left to right is obvious.  Right to left follows from the fact that $\mathcal{C}$ is $d$-maximal -- any $c$ satisfying the right hand condition cannot increase the VC dimension of $\mathcal{C}$, and is therefore already in $\mathcal{C}$.  
\end{proof}

By Sauer's Lemma, the hypothesis on $\mathcal{C}$ in Proposition \ref{P:P3} will hold whenever $X$ is finite and $\mathcal{C}$ is $d$-maximum.

This has an interesting consequence for maximum formulas.  The following definition is a variation on classical NFCP from Keisler, which considers only positive instances.

\begin{definition}
 Say that a partitioned formula $\phi(\xbar;\ybar)$ is $n$-NFCP if for any $B \sse M^{|\ybar|}$, any set $\{\phi(\xbar;\abar)^{\eta(\abar)}: \abar \in B\}$ of $\pm \phi$-instances is consistent iff any $\Gamma \sse \{\phi(\xbar;\abar)^{\eta(\abar)}: \abar \in B\}$ with $|\Gamma| \leq n$ is consistent.
\end{definition}

\begin{proposition} \label{P:P4}
 Suppose the partitioned formula $\phi(\xbar;\ybar)$ is $d$-*maximum.  Then $\phi(\xbar;\ybar)$ is $(d+1)$-NFCP.
\end{proposition}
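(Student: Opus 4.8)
The plan is to translate the model-theoretic statement in the definition of NFCP into a statement about traces of the \emph{dual} family $\mathcal{C}_{\phi^*}(\mcm)$, which is precisely the family the hypothesis controls, and then feed it into the forbidden-label characterization of Proposition \ref{P:P3}. First I would unwind the definitions. For $B \sse M^{|\ybar|}$ and $\eta : B \to 2$, the set $\{\phi(\xbar;\abar)^{\eta(\abar)} : \abar \in B\}$ is consistent, i.e. realized by some $\cbar \in M^{|\xbar|}$, exactly when the subset $S = \{\abar \in B : \eta(\abar)=1\}$ occurs as a trace $\{\abar \in B : \models \phi(\cbar,\abar)\}$ of the family $\mathcal{C}_{\phi^*}(\mcm) = \{\{\bbar : \models \phi(\cbar,\bbar)\} : \cbar \in M^{|\xbar|}\}$ on $B$. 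This is the crucial identification: the hypothesis that $\phi$ is $d$-*maximum says exactly that $\mathcal{C}_{\phi^*}(\mcm)$ is $d$-maximum, so consistency of $\pm\phi$-instances is governed by the combinatorics of a $d$-maximum set system over the ground set $M^{|\ybar|}$.

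Next I would reduce to finite sets by compactness: the type over $B$ is consistent iff every finite subtype is, so it suffices to show that for each finite $A \sse B$ the type $\{\phi(\xbar;\abar)^{\eta(\abar)} : \abar \in A\}$ is consistent, under the assumption that all subtypes of size $\leq d+1$ are consistent. For finite $A$ the restriction $\mathcal{C}_{\phi^*}(\mcm)(A)$ is again $d$-maximum, since traces on subsets of $A$ are unchanged, and hence (being over a finite ground set) $d$-maximal; thus Proposition \ref{P:P3} applies to it.

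Then I would match the two finite-character conditions at level $d+1$. On a set $A_0 \in [A]^{d+1}$, a $d$-maximum class realizes $\Phi_d(d+1) = 2^{d+1}-1$ of the $2^{d+1}$ possible labels, omitting exactly one, namely the forbidden label $A_0^*$; hence the size-$(d+1)$ subtype indexed by $A_0$ is consistent if and only if $S \cap A_0 \neq A_0^*$. Therefore, when $|A| \geq d+2$, the assumption ``every $\leq(d+1)$-subtype is consistent'' translates into ``$S \cap A_0 \neq A_0^*$ for every $A_0 \in [A]^{d+1}$,'' which is precisely the right-hand side of Proposition \ref{P:P3} for $c = S$. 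That proposition then gives $S \in \mathcal{C}_{\phi^*}(\mcm)(A)$, i.e. the type over $A$ is consistent. When $|A| \leq d+1$ the type over $A$ is itself one of the hypothesised subtypes, so it is consistent directly. Compactness then yields consistency of the full type, and the converse direction of the NFCP equivalence is trivial.

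The main thing to get right is the duality bookkeeping. One must be careful that consistency of $\pm\phi$-instances in the free variable $\xbar$ corresponds to traces of $\mathcal{C}_{\phi^*}$ and not $\mathcal{C}_{\phi}$, so that it is the $d$-*maximum hypothesis (rather than $d$-maximum) that delivers a maximum family over the relevant ground set $M^{|\ybar|}$. One must also check that the forbidden label $A_0^*$ on a $(d+1)$-set is the same object whether computed inside $\mathcal{C}_{\phi^*}(\mcm)$, inside $\mathcal{C}_{\phi^*}(\mcm)(A)$, or inside $\mathcal{C}_{\phi^*}(\mcm)(A_0)$, since all three induce the same traces on $A_0$. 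Everything beyond these identifications is routine compactness together with the already-established forbidden-label machinery of Proposition \ref{P:P3}.
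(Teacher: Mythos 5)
Your proposal is correct and follows essentially the same route as the paper's own proof: reduce to a finite parameter set by compactness, observe that the finite restriction of $\mathcal{C}_{\phi^*}(\mcm)$ is both $d$-maximum and $d$-maximal, and then apply the forbidden-label criterion of Proposition \ref{P:P3} to the positively-labelled set, with $(d+1)$-consistency guaranteeing that no forbidden label is induced. Your version merely spells out more explicitly the duality bookkeeping and the matching between $(d+1)$-subtypes and forbidden labels on $(d+1)$-element sets, which the paper compresses into a single sentence.
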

\begin{proof}
 Let $p(\xbar) = \{\phi(\xbar;\abar)^{\eta(\abar)} :\abar \in B\}$ be a set of $\pm \phi$-instances, and suppose it is $(d+1)$-consistent.  By compactness we may assume $B$ is finite. Let $pos(p) = \{\abar \in B: \eta(\abar)=1\}$. Note that since $B$ is finite, $\mathcal{C}_{\phi^*}(B)$ is maximal and maximum.  By Proposition \ref{P:P3}, $pos(p) \in \mathcal{C}_{\phi^*}(B)$, since it does not induce a forbidden label, by $(d+1)$-consistency.  But this implies that $p(\xbar)$ has a witness in $\mcm$, and so is consistent.
\end{proof}

 Proposition \ref{P:P4} can be juxtaposed with Helly's Theorem from combinatorial geometry (see \cite{Ma00}). 

For any two functions $\eta:n \rightarrow 2$ and $\eta':m \rightarrow 2$ on natural numbers $m \leq n$ say that $\eta' \sqsubseteq \eta$ if there is an order preserving function $\nu:m \rightarrow n$ such that for all $i \in m$, $\eta'(i) = \eta(\nu(i))$.

\begin{theorem} \label{T:T1}
Let $\phi(\xbar;\ybar)$ be a $d$-*maximum formula.  Suppose $\seq[I]{\abar}$ is a sequence compatible with $\ybar$ which is $\D{\phi}{d+1}$-indiscernible.  Then $\seq[I]{\abar}$ is $\D{\phi}{\omega}$-indiscernible.  
\end{theorem}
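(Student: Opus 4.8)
The plan is to recast the statement entirely in terms of which Boolean patterns are realized. For an increasing tuple $\abar_{i_1},\ldots,\abar_{i_n}$ from the sequence and $\eta:n\rightarrow 2$, observe that $\models \phi_\eta(\abar_{i_1},\ldots,\abar_{i_n})$ holds precisely when some parameter $\cbar\in M^{|\xbar|}$ realizes the pattern $\eta$ on $\abar_{i_1},\ldots,\abar_{i_n}$; since $\phi^*(\ybar;\xbar)=\phi(\xbar;\ybar)$, this says exactly that $\eta$, read as a labelling of the ordered set $\{\abar_{i_1},\ldots,\abar_{i_n}\}$, belongs to the dual family $\mathcal{C}_{\phi^*}(\{\abar_{i_1},\ldots,\abar_{i_n}\})$. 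Thus $\D{\phi}{n}$-indiscernibility is equivalent to the assertion that the realized-pattern set $\{\eta:n\rightarrow 2 \mid\ \models\phi_\eta(\abar_{i_1},\ldots,\abar_{i_n})\}$ depends only on $n$ and not on the chosen indices. Because $\phi$ is $d$-*maximum, $\mathcal{C}_{\phi^*}$ is $d$-maximum on every finite set of $\ybar$-tuples, and I would establish the claim for all $n$ directly (no induction) by splitting on the size of $n$ relative to $d+1$.

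For $n\leq d$ the claim is automatic: $\Phi_d(n)=2^n$, so $d$-*maximum forces $\mathcal{C}_{\phi^*}$ to shatter every $n$-element set, whence every $\eta:n\rightarrow 2$ is realized and the realized-pattern set is all of ${}^n2$ irrespective of the indices. For $n=d+1$ the claim is the hypothesis, but I would extract its content: on a $(d+1)$-element set $\mathcal{C}_{\phi^*}$ has size $\Phi_d(d+1)=2^{d+1}-1$, so by Proposition~\ref{P:P2} exactly one pattern is omitted, the forbidden label. By $\D{\phi}{d+1}$-indiscernibility all increasing $(d+1)$-subtuples of the sequence share a single realized-pattern set, hence one and the same forbidden label $\eta^*:d+1\rightarrow 2$ under the order identification of a $(d+1)$-subtuple with $\{0,\ldots,d\}$.

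The heart is $n>d+1$. Fixing an increasing $n$-tuple with base set $A$, I would note that $A$ is finite, so $\mathcal{C}_{\phi^*}(A)$ is $d$-maximum — this is immediate from $\mathcal{C}_{\phi^*}(A)(A'')=\mathcal{C}_{\phi^*}(A'')$ together with Sauer's Lemma — and therefore $d$-maximal. Proposition~\ref{P:P3} then gives: a labelling $\eta$ lies in $\mathcal{C}_{\phi^*}(A)$ iff for every $(d+1)$-element subset $B\subseteq A$ the restriction $\eta\upharpoonright B$ differs from the forbidden label $B^*$. Each such $B$ is an increasing $(d+1)$-subtuple of the original sequence, so by the previous paragraph $B^*=\eta^*$. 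Writing $B$ as an order-preserving map $\nu:d+1\rightarrow n$, the restriction $\eta\upharpoonright B$ becomes $\eta\circ\nu$, and the condition reads $\eta\circ\nu\neq\eta^*$ for every such $\nu$, i.e. $\eta^*\not\sqsubseteq\eta$. Hence the realized-pattern set at level $n$ is exactly $\{\eta:n\rightarrow 2 \mid \eta^*\not\sqsubseteq\eta\}$, which depends only on $\eta^*$ and $n$, not on the chosen indices. This delivers $\D{\phi}{n}$-indiscernibility for all $n$, i.e. $\D{\phi}{\omega}$-indiscernibility.

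The main obstacle is purely bookkeeping: aligning the order-identification between labellings of $A$ and functions $n\rightarrow 2$, between $(d+1)$-subsets and order-preserving maps $\nu$, and the forbidden label $\eta^*$, so that Proposition~\ref{P:P3} collapses cleanly to $\eta^*\not\sqsubseteq\eta$. The one genuinely load-bearing use of the hypothesis is the step asserting that all $(d+1)$-subtuples carry the \emph{same} forbidden label $\eta^*$, which is exactly where $\D{\phi}{d+1}$-indiscernibility enters; everything else is a consequence of the $d$-*maximum structure of $\mathcal{C}_{\phi^*}$ on finite sets.
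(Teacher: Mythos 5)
Your proposal is correct and follows essentially the same route as the paper's own proof: both extract a single forbidden label $\eta^*$ shared by all increasing $(d+1)$-subtuples (via Proposition~\ref{P:P2} and the $\D{\phi}{d+1}$-indiscernibility hypothesis), and then use Proposition~\ref{P:P3} on the finite base set to characterize the realized patterns at level $n$ as exactly those $\eta$ with $\eta^*\not\sqsubseteq\eta$, a set depending only on $n$ and $\eta^*$. Your version merely makes explicit some bookkeeping the paper leaves implicit (the cases $n\leq d$ and $n=d+1$, and why $\mathcal{C}_{\phi^*}(A)$ is $d$-maximal on finite $A$).
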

\begin{proof}

Suppose $n \in \omega$, and that $i_1 < \cdots < i_n$, and $j_1 < \cdots, j_n$ are subsequences of $I$.  We must show $tp_{\D{\phi}{n}}(\abar_{i_1},\ldots,\abar_{i_n}) = tp_{\D{\phi}{n}}(\abar_{j_1},\ldots,\abar_{j_n})$, or equivalently that $\models \phi_\eta(\abar_{i_1},\ldots,\abar_{i_n})\equiv\phi_\eta(\abar_{j_1},\ldots,\abar_{j_n})$ for all $\eta: n \rightarrow 2$.  Let $\FF_1 = \{\eta:n \rightarrow 2: \models \phi_\eta(\abar_{i_1},\ldots,\abar_{i_n})\}$ and $\FF_2 = \{\eta:n \rightarrow 2: \models \phi_\eta(\abar_{j_1},\ldots,\abar_{j_n})\}$

Since $\seq[I]{\abar}$ is $\D{\phi}{d+1}$-indiscernible, all length $d+1$ subsequences have the same forbidden label $\eta^*$, where $\eta^*$ is as in Proposition \ref{P:P2}.  
By Proposition \ref{P:P3}, for any $\eta:n \rightarrow 2$, the following are equivalent.

\begin{enumerate}
\item $\eta \in \FF_1$
\item not $\eta^* \sqsubseteq \eta$.
\item $\eta \in \FF_2$
\end{enumerate}

Therefore $tp_{\D{\phi}{n}}(\abar_{i_1},\ldots,\abar_{i_n}) = tp_{\D{\phi}{n}}(\abar_{j_1},\ldots,\abar_{j_n})$, and $\seq[I]{\abar}$ is $\D{\phi}{\omega}$ indiscernible. 
\end{proof}

The following corollary to Theorem \ref{T:T1} says that maximum formulas allow a strong form of ``extraction" of indiscernibles.
\begin{corollary} \label{C:C1}
Let $\phi(\xbar;\ybar)$ be a $d$-*maximum formula and suppose $\seq[I]{\abar}$ is a sequence compatible with $\ybar$.  Then $\seq[I]{\abar}$ contains a $\D{\phi}{\omega}$-indiscernible subsequence.
\end{corollary}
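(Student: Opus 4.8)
The plan is to combine Theorem~\ref{T:T1} with a single application of Ramsey's theorem. The key observation is that Theorem~\ref{T:T1} reduces the task of producing a $\D{\phi}{\omega}$-indiscernible subsequence to the much easier task of producing a $\D{\phi}{d+1}$-indiscernible one: once the $\D{\phi}{d+1}$-types of all increasing length-$(d+1)$ subsequences agree, indiscernibility at every finite arity follows for free. Now $\D{\phi}{d+1}$ is a \emph{finite} set of formulas, consisting of the $2^{d+1}$ formulas $\phi_\eta$ for $\eta:d+1\rightarrow 2$, so there are only finitely many complete $\D{\phi}{d+1}$-types. (In fact, by Proposition~\ref{P:P2} each length-$(d+1)$ subsequence is pinned down by its forbidden label $\eta^*$, so there are at most $2^{d+1}$ realized types.)

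Assuming $I$ is infinite (if it is finite there is nothing to do), I would first color the set $[I]^{d+1}$ of increasing $(d+1)$-tuples from $I$ by sending $i_1 < \cdots < i_{d+1}$ to the $\D{\phi}{d+1}$-type of $(\abar_{i_1},\ldots,\abar_{i_{d+1}})$ — equivalently, to its forbidden label. By the previous paragraph this uses only finitely many colors. Applying the infinite Ramsey theorem for $(d+1)$-element subsets then yields an infinite homogeneous $J \sse I$, on which every increasing $(d+1)$-tuple carries one and the same $\D{\phi}{d+1}$-type. By definition this makes the subsequence $\seq[J]{\abar}$ $\D{\phi}{d+1}$-indiscernible. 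Finally I would invoke Theorem~\ref{T:T1}: since $\phi$ is $d$-*maximum and $\seq[J]{\abar}$ is $\D{\phi}{d+1}$-indiscernible, it is in fact $\D{\phi}{\omega}$-indiscernible, which is exactly the desired subsequence.

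I do not expect a genuine obstacle here, precisely because Theorem~\ref{T:T1} has already absorbed all the difficulty. The point worth flagging is the contrast with the general model-theoretic situation, where extracting an indiscernible sequence requires stabilizing types at every arity simultaneously — typically via the Erd\H{o}s--Rado theorem over a sufficiently large index set — whereas the maximum hypothesis collapses the problem to a single Ramsey argument in the fixed dimension $d+1$ with finitely many colors. The only caveat is cardinality: for uncountable $I$, Ramsey's theorem still delivers an infinite (countable) homogeneous subsequence, which suffices for the statement as phrased; securing a homogeneous subsequence of size $|I|$ would instead require Erd\H{o}s--Rado, but this is not needed for the corollary.
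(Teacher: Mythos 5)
Your proof is correct and is essentially the paper's own argument: the paper likewise colors increasing $(d+1)$-tuples by their $\D{\phi}{d+1}$-type, applies Ramsey's Theorem to extract a $\D{\phi}{d+1}$-indiscernible subsequence, and then invokes Theorem~\ref{T:T1} to upgrade it to $\D{\phi}{\omega}$-indiscernibility. Your additional remarks on the finiteness of the color set and on the cardinality caveat are sound elaborations of the same two-step proof.
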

\begin{proof}
By Ramsey's Theorem, $\seq[I]{\abar}$ contains a $\D{\phi}{d+1}$-indiscernible subsequence.  By Theorem \ref{T:T1}, this sequence is also $\D{\phi}{\omega}$-indiscernible.
\end{proof}

\begin{definition}[Shelah]
 A formula $\phi(\xbar;\ybar)$ is \textit{stable} in $\mcm$ if for some $N \in \omega$ there are not sequences $\abar_1,\ldots,\abar_N \in M^{|\xbar|}$ and $\bbar_1,\ldots,\bbar_N \in M^{|\ybar|}$ such that $\phi(\abar_i;\bbar_j)^\mcm \iff i < j$.
\end{definition}


\begin{lemma}[Shelah] \label{L:L1}
Suppose $\phi(\xbar;\ybar)$ is stable, and $\seq[I]{\abar}$ is an infinite $\D{\phi}{n}$-indiscernible sequence for $n \in \omega$.  Then $\seq[I]{\abar}$ is a $\D{\phi}{n}$-indiscernible set.
\end{lemma}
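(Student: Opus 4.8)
The plan is to isolate the single obstruction to set-indiscernibility --- a permutation of the label $\eta$ --- and then to show that stability forbids it via the order property. First I would record the elementary identity that permuting the arguments of $\phi_\eta$ merely permutes $\eta$: for $\sigma \in S_n$,
$$\phi_\eta(\abar_{\sigma(1)},\ldots,\abar_{\sigma(n)}) = \exists\xbar\bigwedge_{i\in n}\phi(\xbar;\abar_{\sigma(i)})^{\eta(i)} = \phi_{\eta\circ\sigma^{-1}}(\abar_1,\ldots,\abar_n),$$
and $\eta\circ\sigma^{-1}$ is again a map $n\to 2$, so $\D{\phi}{n}$ is closed under this action. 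Since $\seq[I]{\abar}$ is $\D{\phi}{n}$-indiscernible as a sequence, every increasing tuple $\abar_{i_1},\ldots,\abar_{i_n}$ assigns $\phi_\eta$ the same truth value $T(\eta)\in\{0,1\}$, and the identity above reduces the lemma to the claim that $T(\eta)$ is invariant under permutations of $\eta$. As any two permutations of a fixed $\eta$ are joined by adjacent transpositions swapping a $0$ with a neighboring $1$, it suffices to rule out $\eta,\eta'\colon n\to 2$ that agree off two adjacent coordinates $k,k+1$, with $\eta(k)=1,\eta(k+1)=0$, $\eta'(k)=0,\eta'(k+1)=1$, and $T(\eta)=1$ but $T(\eta')=0$ (the reverse case being symmetric).

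Next I would freeze a frame and vary the two special coordinates along a long block. Using that $I$ is infinite, choose frame indices $i_1<\cdots<i_{k-1}$ and $i_{k+2}<\cdots<i_n$ (possibly empty) together with a long increasing block $c_1<\cdots<c_m$ with $i_{k-1}<c_1$ and $c_m<i_{k+2}$, and put
$$\psi(\xbar) := \bigwedge_{j<k}\phi(\xbar;\abar_{i_j})^{\eta(j)}\wedge\bigwedge_{j>k+1}\phi(\xbar;\abar_{i_j})^{\eta(j)}.$$
For $p<q$ in the block, inserting $\abar_p,\abar_q$ at coordinates $k,k+1$ yields an increasing $n$-tuple, so $T(\eta)=1$ and $T(\eta')=0$ give respectively that $\exists\xbar[\psi(\xbar)\wedge\phi(\xbar;\abar_p)\wedge\neg\phi(\xbar;\abar_q)]$ holds while $\exists\xbar[\psi(\xbar)\wedge\neg\phi(\xbar;\abar_p)\wedge\phi(\xbar;\abar_q)]$ fails. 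The second fact says that for every $\xbar$ with $\psi(\xbar)$ and all $p<q$ in the block, $\phi(\xbar;\abar_q)\rightarrow\phi(\xbar;\abar_p)$, so the set $\{c_r:\models\phi(\xbar;\abar_{c_r})\}$ is an initial segment of the block. The first fact then furnishes, for each consecutive pair $c_l<c_{l+1}$, a witness $\xbar_l$ with $\psi(\xbar_l)$, $\phi(\xbar_l;\abar_{c_l})$ and $\neg\phi(\xbar_l;\abar_{c_{l+1}})$; by the initial-segment property this forces $\models\phi(\xbar_l;\abar_{c_r})\iff r\le l$.

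Finally I would read off the order property directly from the array $\models\phi(\xbar_l;\abar_{c_r})\iff r\le l$, a half-graph of size about $m$; after reversing one index and shifting by one to discard the diagonal, this exhibits $\bar{u}_1,\ldots,\bar{u}_{m'}\in M^{|\xbar|}$ and $\bar{v}_1,\ldots,\bar{v}_{m'}\in M^{|\ybar|}$ with $\models\phi(\bar{u}_i;\bar{v}_j)\iff i<j$. Since $I$ is infinite, $m$ and hence $m'$ are unbounded, contradicting the stability of $\phi$ and proving the claim. I expect the main obstacle to be the middle step: correctly exploiting $T(\eta')=0$ to extract the initial-segment (monotonicity) structure of $\phi(\xbar;-)$ along the block, and keeping the direction of the resulting half-graph straight so that it genuinely instantiates the order property rather than a tautological pattern.
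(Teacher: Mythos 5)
Your proposal is correct and follows essentially the same route as the paper's proof: reduce failure of set-indiscernibility to an adjacent transposition swapping a $0$ and a $1$ in $\eta$, freeze the remaining coordinates into a frame formula $\psi(\xbar)$, insert a long increasing block between the two swapped positions, and use sequence-indiscernibility to extract a half-graph contradicting stability (your explicit initial-segment step is just what the paper leaves implicit in ``we can therefore find appropriate $\bbar_1,\ldots,\bbar_N$''). The only nit is the final reindexing: from $\phi(\xbar_l;\abar_{c_r}) \iff r\le l$ one needs to reverse \emph{both} indices (or pass to $\neg\phi$, which is unstable iff $\phi$ is) to reach the pattern $i<j$, but this is routine bookkeeping, not a gap.
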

\begin{proof}
Suppose not.  Then since every permutation is a product of transpositions, for some subsequence $\abar_{i_0},\ldots,\abar_{i_l},\abar_{i_{l+1}},\ldots,\abar_{i_{n-1}}$ in $\seq[I]{\abar}$ and some $\eta:n \rightarrow 2$, $$\models \phi_{\eta}(\abar_{i_0},\ldots,\abar_{i_l},\abar_{i_{l+1}},\ldots,\abar_{i_{n-1}}) \wedge \neg \phi_{\eta}(\abar_{i_0},\ldots,\abar_{i_{l+1}},\abar_{i_l},\ldots,\abar_{i_{n-1}})$$  Suppose without loss that $\eta(l)=0$ and $\eta(l+1)=1$.  Define $$\psi(\xbar) = \bigwedge_{j\neq l,j\neq {l+1}}\phi(\xbar;\abar_{i_j})^{\eta(j)}$$ and $B = \psi(\mcm)$.  Note $$\phi_{\eta}(\abar_{i_0},\ldots,\abar_{i_l},\abar_{i_{l+1}},\ldots,\abar_{i_{n-1}}) \equiv \exists \xbar \in B \left ( \neg \phi(\xbar;\abar_{i_l}) \wedge \phi(\xbar;\abar_{i_{l+1}}) \right )$$Fix $N \in \omega$.  Since $\seq[I]{\abar}$ is a $\D{\phi}{n}$-indiscernible sequence, we may assume without loss that there are $\abar_j'$ in $\seq[I]{\abar}$, $j=1,2,\ldots,N$ such that $\abar_{i_l} < \abar_1' < \ldots < \abar_N' < \abar_{i_{l+1}}$.  Also since $\seq[I]{\abar}$ is a $\D{\phi}{n}$-indiscernible sequence, we have $$\models \exists \xbar \in B \left ( \neg \phi(\xbar;\abar_{i}') \wedge \phi(\xbar;\abar_{j}') \right ) \iff i < j$$ 
We can therefore find appropriate $\bbar_1,\ldots,\bbar_N$ in $B$ so that $\phi(\bbar_i;\abar_j')^\mcm \iff i < j$.  Since $N$ was arbitrary, $\phi(\xbar;\ybar)$ is unstable, a contradiction.
\end{proof}

In the following, we let $c_1 \Delta c_2$ denote the symmetric difference of sets $c_1$ and $c_2$.

\begin{lemma} \label{L:L1.5}
Let $\phi(\xbar;\ybar)$ be a stable $d$-*maximum formula.  Then there is a number $N \in \omega$ such that for all $c_1,c_2 \in \mathcal{C}_{\phi^*}(\mcm)$, $|c_1 \Delta c_2| < N$.
\end{lemma}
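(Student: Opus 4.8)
The plan is to argue by contradiction: assuming $|c_1 \Delta c_2|$ is unbounded over $\mathcal{C}_{\phi^*}(\mcm)$, I will manufacture arbitrarily long half-graphs for $\phi$ and contradict stability. Throughout, write $c_{\bbar}$ for the member of $\mathcal{C}_{\phi^*}(\mcm)$ with parameter $\bbar \in M^{|\xbar|}$, so $c_{\bbar} = \{\abar : \models \phi(\bbar;\abar)\}$. First I would compress the unboundedness into a single bad pair via compactness. For each $k$ there exist parameters giving $c_1,c_2$ with $|c_1 \Delta c_2| \geq k$, so the partial type
$$\{\ybar_i \neq \ybar_j : i<j<\omega\} \cup \{\phi(\xbar_1;\ybar_i) \leftrightarrow \neg\phi(\xbar_2;\ybar_i) : i<\omega\}$$
is finitely satisfiable, hence realized in $\mcm$ by some $\bbar_1,\bbar_2$ and distinct $\abar_i$ ($i<\omega$) each lying in $c_{\bbar_1}\Delta c_{\bbar_2}$. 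After a pigeonhole step, and swapping $\bbar_1,\bbar_2$ if necessary, I may assume every $\abar_i$ lies in $c_{\bbar_1}\setminus c_{\bbar_2}$.

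Next I would regularize the points. By Corollary \ref{C:C1}, $\seq{\abar}$ contains an infinite $\D{\phi}{\omega}$-indiscernible subsequence, which I keep calling $\seq{\abar}$; being a subsequence, all its terms still lie in $c_{\bbar_1}\setminus c_{\bbar_2}$. As in the proof of Theorem \ref{T:T1}, all length-$(d{+}1)$ subsequences then share a single forbidden label $\eta^* : d+1 \rightarrow 2$. I would then observe that $\eta^*$ is \emph{mixed}, i.e.\ neither the constant $0$ nor the constant $1$ string: since $c_{\bbar_1}\in \mathcal{C}_{\phi^*}(\mcm)$ contains every $\abar_i$, the all-ones label is realized on each $(d{+}1)$-subset, whence $\eta^*$ cannot be $\langle 1,\dots,1\rangle$; dually $c_{\bbar_2}$ omits every $\abar_i$, so $\eta^* \neq \langle 0,\dots,0\rangle$.

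The crux is to turn a mixed $\eta^*$ into the order property. Recall from the proof of Theorem \ref{T:T1} (which invokes Proposition \ref{P:P3} on finite subsets, where $\mathcal{C}_{\phi^*}$ is maximum and maximal) that a label $\eta : m \rightarrow 2$ is realized on $\abar_0,\dots,\abar_{m-1}$ precisely when $\eta^* \not\sqsubseteq \eta$. A mixed $\eta^*$ must contain $\langle 1,0\rangle$ or $\langle 0,1\rangle$ as an ordered subpattern. In the first case no ascending ``staircase'' label $0^{k}1^{m-k}$ can have $\eta^*$ as a subpattern, since every subpattern of $0^{a}1^{b}$ is itself ascending; hence each such label is realized. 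In the second case the descending labels $1^{k}0^{m-k}$ are all realized for the symmetric reason. Fixing $m$ points of the sequence and realizing, say, each $0^{k}1^{m-k}$ by a parameter $\bbar^{(k)}$, I obtain $\models \phi(\bbar^{(k)};\abar_j) \iff k\leq j$, an $m\times m$ half-graph; taking the $\bbar^{(k)}$ in the $\xbar$-slot and the $\abar_j$ in the $\ybar$-slot and reindexing to make the order strict, this is exactly the configuration forbidden by stability. Since $m$ is arbitrary and the sequence infinite, $\phi$ is unstable, a contradiction; the boundedness constant $N$ is then any uniform bound forced by this argument. The main obstacle is precisely this last conversion — guaranteeing that a single forbidden pattern can always be dodged by an unbounded monotone family of labels — and it is resolved cleanly by selecting the staircase direction according to whether $\eta^*$ realizes $\langle 1,0\rangle$ or $\langle 0,1\rangle$.
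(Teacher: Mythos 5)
Your proof is correct, and it diverges from the paper's at the decisive step, so it is worth comparing the two. Both arguments share the same skeleton: compactness reduces uniform boundedness to finiteness of a single symmetric difference, a pigeonhole step puts infinitely many points on one side, Corollary \ref{C:C1} extracts an infinite $\D{\phi}{\omega}$-indiscernible subsequence, and the common forbidden label $\eta^*$ of Proposition \ref{P:P2} is seen to be non-constant because $\bbar_1$ witnesses the all-ones label and $\bbar_2$ the all-zeros label on every $(d+1)$-subset. From there the paper uses stability \emph{early}, via Lemma \ref{L:L1}, to upgrade the indiscernible sequence to an indiscernible \emph{set}; transposing two coordinates on which $\eta^*$ differs then produces a second omitted label $\mu \neq \eta^*$, contradicting the uniqueness in Proposition \ref{P:P2}. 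You never invoke Lemma \ref{L:L1}: working with the sequence alone, you quote the realization criterion established in the proof of Theorem \ref{T:T1} (a label $\eta$ is realized iff $\eta^* \not\sqsubseteq \eta$), note that a non-constant $\eta^*$ contains $\langle 1,0\rangle$ or $\langle 0,1\rangle$, and realize the corresponding monotone staircase labels $0^k1^{m-k}$ (or $1^k0^{m-k}$), whose subpatterns are all monotone and hence never contain the opposite two-element pattern; the witnesses then form arbitrarily large half-graphs in the correct variable slots, contradicting stability in its definitional order-property form (your closing reindexings to make the order strict, and the symmetric descending case, are routine and fine). The trade-off: the paper's route is shorter once Shelah's Lemma \ref{L:L1} is available and keeps the contradiction inside the forbidden-label machinery, while yours is more self-contained --- stability is used exactly once, in its raw form --- and it isolates a fact of independent interest: for a $d$-*maximum formula, an infinite one-sided difference between two sets of $\mathcal{C}_{\phi^*}(\mcm)$ already produces the order property, with no stability hypothesis needed to reach that point.
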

\begin{proof}
If $d=0$ then $|\mathcal{C}_{\phi^*}(\mcm)|=1$ and the lemma is trivial, so assume $d>1$. 

By compactness, it is enough to prove that for all $c_1,c_2 \in \mathcal{C}_{\phi^*}(\mcm)$, $|c_1 \Delta c_2| < \aleph_0$.  Suppose, by way of contradiction, that for some $c_1,c_2 \in \mathcal{C}_{\phi^*}(\mcm)$, $D=c_1 \Delta c_2$ is infinite.  Without loss, assume $D \sse c_1$.  By Corollary \ref{C:C1} and Lemma \ref{L:L1}, there is $D' \subseteq D$, an infinite $\D{\phi}{\omega}$-indiscernible set.  Let $\abar_0,\ldots, \abar_{d}$ be distinct elements in $D'$.  Consider $tp_{\D{\phi}{d+1}}(\abar_0,\ldots, \abar_{d})$.  By Proposition \ref{P:P2}, there is exactly one $\eta^*:d+1 \rightarrow 2$ such that $\models \neg \phi_{\eta^*}(\abar_0,\ldots, \abar_{d})$.  By choice of $D$, we know that $\eta^*$ is not a constant function. Suppose, without loss, that $\eta^*(0) \neq \eta^*(1)$.  Define $\mu:d+1 \rightarrow 2$ as
\begin{equation*}
\mu(i) =
\begin{cases}
\eta^*(i) & \text{ if } i \in (d+1) - \{0,1\} \\
1-\eta^*(i) & \text{ otherwise }
\end{cases}
\end{equation*}
Since $D'$ is an indiscernible set, the following are equivalent.
\begin{enumerate}
\item $\models \phi_{\eta^*}(\abar_0,\abar_1,\ldots, \abar_{d})$
\item $\models \phi_{\eta^*}(\abar_1,\abar_0,\ldots, \abar_{d})$
\item $\models \phi_{\mu}(\abar_0,\abar_1,\ldots, \abar_{d})$
\end{enumerate}
But then $\eta^*$ is not the unique forbidden label for $\{\abar_0,\ldots, \abar_{d}\}$, contradicting Proposition \ref{P:P2}.
\end{proof}
If $\mathcal{C} \sse 2^X$ for a set $X$ and $c \in \mathcal{C}$, define $\mathcal{C} \Delta c = \{f \Delta c : f \in \mathcal{C}\}$.  This operation clearly preserves many properties.

\begin{proposition}\label{P:P3.5}
 Let $\mathcal{C} \sse 2^X$ for a set $X$.  Then $\mathcal{C}$ is (maximum, maximal, stable) if and only if $\mathcal{C} \Delta A$ is (maximum, maximal, stable) for any $A \sse X$.
\end{proposition}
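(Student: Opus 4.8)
The plan is to build everything on the single observation that the map $\sigma_A : c \mapsto c \Delta A$ is an involution of $2^X$, so that $(\mathcal{C}\Delta A)\Delta A = \mathcal{C}$ and $\mathcal{C}\Delta A = \sigma_A[\mathcal{C}]$. Because of this, for each of the three properties it suffices to prove only the forward implication ``$\mathcal{C}$ has the property $\Rightarrow$ $\mathcal{C}\Delta A$ has the property'': applying that implication with $\mathcal{C}\Delta A$ in place of $\mathcal{C}$ (and the same $A$) then yields the converse for free, since $(\mathcal{C}\Delta A)\Delta A=\mathcal{C}$. This cuts the work in half throughout.

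The engine for the maximum and maximal cases is the trace identity $(c\Delta A)\cap A' = (c\cap A')\Delta(A\cap A')$, valid for every $c\in 2^X$ and every finite $A'\subseteq X$, which is an immediate Boolean computation. It says that $(\mathcal{C}\Delta A)(A')$ is the image of $\mathcal{C}(A')$ under the bijection $B\mapsto B\Delta(A\cap A')$ of $2^{A'}$. Two consequences drop out: $|(\mathcal{C}\Delta A)(A')| = |\mathcal{C}(A')|$, and $\mathcal{C}$ shatters $A'$ iff $\mathcal{C}\Delta A$ does, whence VC$(\mathcal{C}\Delta A)=$ VC$(\mathcal{C})$. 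Since the condition $|\mathcal{C}(A')|=\Phi_d(|A'|)$ for all finite $A'$ refers only to trace sizes and the (unchanged) VC dimension, the maximum case is finished. For the maximal case I would combine VC-invariance with the set identity $(\mathcal{C}\Delta A)\cup\{c\} = (\mathcal{C}\cup\{c\Delta A\})\Delta A$ and the equivalence $c\notin\mathcal{C}\Delta A \iff c\Delta A\notin\mathcal{C}$; then for $c\notin\mathcal{C}\Delta A$ we get VC$((\mathcal{C}\Delta A)\cup\{c\})=$ VC$(\mathcal{C}\cup\{c\Delta A\}) = d+1$ exactly when $\mathcal{C}$ is $d$-maximal.

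The stability case needs care, because $\sigma_A$ flips the membership relation precisely on the points lying in $A$. I would argue contrapositively: suppose $\mathcal{C}\Delta A$ is unstable, witnessed by points $(x_i)_{i\in\omega}$ in $X$ and sets $(c_i)_{i\in\omega}$ in $\mathcal{C}$ with $x_i\in c_j\Delta A \iff i<j$. By pigeonhole one of the blocks $\{i:x_i\notin A\}$, $\{i:x_i\in A\}$ is infinite; call it $I'$. If $x_i\notin A$ for $i\in I'$, then $x_i\in c_j\Delta A \iff x_i\in c_j$, so $(x_i,c_i)_{i\in I'}$ witnesses the order property for $\mathcal{C}$ directly. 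If instead $x_i\in A$ for $i\in I'$, then $x_i\in c_j \iff i\ge j$; reversing the index order converts this to an $\le$-configuration, and a one-step shift of the point indices converts that to a strict order property, giving arbitrarily long witnesses for $\mathcal{C}$. Either way $\mathcal{C}$ is unstable, and the converse follows from the involution as above.

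The main obstacle is exactly this membership-flip in the stability argument: a naive transport of an order property through $\sigma_A$ fails because points inside and outside $A$ behave oppositely, and the remedy is the pigeonhole split into the two homogeneous blocks followed by the order-reversal bookkeeping. Everything else, namely the trace identity and the two set identities, is routine Boolean algebra that I would verify once and reuse across the three cases.
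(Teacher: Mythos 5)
Your proof is correct. The paper in fact gives no proof of this proposition --- it is stated as an immediate consequence of the remark that the operation $\mathcal{C} \mapsto \mathcal{C}\Delta A$ ``clearly preserves many properties'' --- and your argument (the involution reduction, the trace identity $(c\Delta A)\cap A' = (c\cap A')\Delta(A\cap A')$ for the maximum and maximal cases, and the pigeonhole split with order-reversal for stability) is precisely the routine verification the paper leaves to the reader; the only cosmetic adjustment is to run the stability case with arbitrarily long finite order configurations rather than a single infinite one (no compactness is available for an abstract set system), which your pigeonhole argument handles verbatim by starting from a length-$2N$ configuration.
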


  If also $\ga \sse 2^X$ and $\mathcal{C} \Delta c = \ga$ then $\mathcal{C} = \ga \Delta c$, (because symmetric difference is associative and therefore $\mathcal{C} \Delta c\Delta c = \mathcal{C}$). Say that $\mathcal{C}$ is a subfamily of $\ga$ if $\mathcal{C} \sse \ga$. 
 
 
\begin{theorem} \label{T:T4}
A $d$-*maximum partitioned formula $\phi(\xbar;\ybar)$ is stable iff $\mathcal{C}_{\phi^*}(\mcm)$ is a subfamily of $[M^{|\ybar|}]^{\leq n} \Delta c$ for some $n \in \omega$ and any $c \in \mathcal{C}_{\phi^*}(\mcm)$.
\end{theorem}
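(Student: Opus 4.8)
The plan is to read both directions off structural facts already in hand, with all genuine content residing in Lemma \ref{L:L1.5}. Throughout I set $X := M^{|\ybar|}$, the base set of $\mathcal{C}_{\phi^*}(\mcm)$, whose members are the sets $c_{\abar} = \{\bbar \in M^{|\ybar|} : \models \phi(\abar;\bbar)\}$ indexed by $\abar \in M^{|\xbar|}$; note that the \emph{points} of $X$ correspond to the $\ybar$-slot while the \emph{sets} are parameterized by the $\xbar$-slot. Keeping this duality straight is the only real bookkeeping in the argument.

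For the forward direction, assume $\phi$ is stable. Since $\phi$ is also $d$-*maximum, Lemma \ref{L:L1.5} supplies an $N \in \omega$ with $|c_1 \Delta c_2| < N$ for all $c_1, c_2 \in \mathcal{C}_{\phi^*}(\mcm)$. I put $n = N-1$ and fix an arbitrary base set $c \in \mathcal{C}_{\phi^*}(\mcm)$. For any $c' \in \mathcal{C}_{\phi^*}(\mcm)$, the set $A := c' \Delta c$ satisfies $|A| \leq n$, so $A \in [X]^{\leq n}$; since $A \Delta c = (c'\Delta c)\Delta c = c'$, we get $c' \in [X]^{\leq n}\Delta c$. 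As $c'$ was arbitrary, $\mathcal{C}_{\phi^*}(\mcm) \subseteq [X]^{\leq n}\Delta c$, and crucially the bound $n$ coming from Lemma \ref{L:L1.5} does not depend on the choice of base set $c$, which is exactly the asserted ``for some $n$ and any $c$'' conclusion.

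For the converse, suppose $\mathcal{C}_{\phi^*}(\mcm) \subseteq [X]^{\leq n}\Delta c$ for some $n$ and some $c \in \mathcal{C}_{\phi^*}(\mcm)$. I would first show $[X]^{\leq n}$ is stable as a set system: in any witness $x_1,\ldots,x_N$, $c_1,\ldots,c_N$ to the order property (with $x_a \in c_b \iff a < b$) the points are forced to be distinct and $c_N$ contains $x_1,\ldots,x_{N-1}$, so $n \geq |c_N| \geq N-1$; hence no order property of length $n+2$ exists. By Proposition \ref{P:P3.5}, $[X]^{\leq n}\Delta c$ is then stable as well, and stability passes to subfamilies since an order-property witness inside a subfamily is one in the ambient family. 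Thus $\mathcal{C}_{\phi^*}(\mcm)$ admits no order property beyond some fixed length. Finally I transfer this back to $\phi$: an unstable $\phi$ yields, for every $N$, tuples with $\phi(\abar_i;\bbar_j) \iff i<j$, and reindexing by $a \mapsto N+1-a$ converts these into points $\bbar_{N+1-a}$ and sets $c_{\abar_{N+1-b}} \in \mathcal{C}_{\phi^*}(\mcm)$ satisfying $x_a \in d_b \iff a<b$, i.e.\ a set-system order property of length $N$ in $\mathcal{C}_{\phi^*}(\mcm)$, contradicting the bound just obtained. Hence $\phi$ is stable.

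The proof is essentially assembly rather than invention, so there is no substantial obstacle once Lemma \ref{L:L1.5} is available: the forward direction is a one-line corollary of the symmetric-difference diameter bound, and the converse only requires the elementary stability of $[X]^{\leq n}$ together with the invariance of stability under $\Delta c$ (Proposition \ref{P:P3.5}) and under passage to subfamilies. The point deserving care is the translation between stability of the formula $\phi$ and stability of the family $\mathcal{C}_{\phi^*}(\mcm)$; the reindexing $a \mapsto N+1-a$ handles this cleanly and avoids having to invoke any black-box duality result.
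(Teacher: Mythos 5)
Your proof is correct and takes essentially the same route as the paper's: the forward direction is exactly Lemma \ref{L:L1.5} plus associativity of symmetric difference, and the converse is Proposition \ref{P:P3.5} applied to $[M^{|\ybar|}]^{\leq n} \Delta c$, with stability passing to the subfamily $\mathcal{C}_{\phi^*}(\mcm)$. The only difference is that you make explicit several steps the paper leaves implicit (the elementary stability of $[M^{|\ybar|}]^{\leq n}$, the subfamily inheritance, and the reindexing that translates set-system stability back into stability of the formula $\phi$), which fills genuine small gaps rather than changing the argument.
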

\begin{proof}
 Let $\phi(\xbar;\ybar)$ be a $d$-*maximum partitioned formula.  Suppose $\phi(\xbar;\ybar)$ is stable. Then by Lemma \ref{L:L1.5}, for any $c \in \mathcal{C}_{\phi^*}(\mcm)$, $\mathcal{C}_{\phi^*}(\mcm)\Delta c \sse [\mcm^{|\ybar|}]^{\leq n}$ for some $n\in \omega$.  Therefore $\mathcal{C}_{\phi^*}(\mcm) \sse [\mcm^{|\ybar|}]^{\leq n}\Delta c$, by associativity of symmetric difference.  

Now, conversely, suppose $\mathcal{C}_{\phi^*}(\mcm) \sse [\mcm^{|\ybar|}]^{\leq n}\Delta c$.  By Proposition \ref{P:P3.5}, $[\mcm^{|\ybar|}]^{\leq n}\Delta c$ is a stable family of sets, and therefore $\mathcal{C}_{\phi^*}(\mcm)$ is also.  Then $\phi(\xbar;\ybar)$ is stable.
\end{proof}
In traditional notation, if $\gc$ is a maximum stable family on a set $X$, then $\gc \sse [X]^{\leq n} \Delta A$ for some $A \sse X$ and $n \in \omega$.
\section{} \label{S:S3}
 
Note that in any field, the formula $\phi(x;\ybar) = (p(x;\ybar)=0)$ where $p$ is a polynomial with coefficients $\ybar$ will be stable and maximum. On the other hand, polynomial equalities in dimensions greater than one will still be stable, but not maximum in general.\footnote{To see that polynomial equalities are stable formulas, note that they are quantifier free, and that every field is contained in its algebraic closure (which is stable).}  For instance, the symmetric difference of two distinct lines in $\bbd{R}^2$ is infinite.  Surprisingly, some semi-algebraic families are still in some sense approximately maximum.

For a set $H$ of real valued functions on a set $X$ and a real valued function $f_0(x)$ on $X$, let $f_0-H = \{f_0(x)-f(x):f \in H\}$.  Let $pos(f) = \{x \in X: f(x)>0\}$.  Define $pos(f_0-H)=\{pos(f_0(x)-f(x)): f \in H\}$.  

\begin{proposition}[Floyd \cite{Fl89}, Theorem 8.2] \label{P:P5}
 Let $H$ be a $n$-dimensional vector space of real valued functions on the set $X$, such that for every $X_0 \in [X]^n$, $H$ restricted to $X_0$ is also $n$-dimensional.  Further, for the real valued function $f_0(x)$ on $X$, assume that there are at most $n$ elements of $X$ such that $f_0(x)-f(x)=0$ for any $f \in H$. Then the class $\mathcal{C} = pos(f_0-H)$ is a maximum class of VC dimension $n$ on $X$.
\end{proposition}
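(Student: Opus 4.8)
The plan is to reduce the computation of $|\mathcal{C}(A)|$ to counting the regions of an affine hyperplane arrangement in $\mathbb{R}^n$, and to read the two hypotheses off as a ``general position'' condition on that arrangement. Fix a basis $h_1,\dots,h_n$ of $H$, so that every $f \in H$ is $\sum_j c_j h_j$ for a unique $c = (c_1,\dots,c_n) \in \mathbb{R}^n$. For $x \in X$ put $g_x(c) = f_0(x) - \sum_j c_j h_j(x)$, an affine function of $c$ with gradient $-N_x$, where $N_x = (h_1(x),\dots,h_n(x))$. Then $pos(f_0-f) = \{x : g_x(c) > 0\}$, so for finite $A \subseteq X$ with $|A| = m$ the family $\mathcal{C}(A)$ is exactly the collection of ``positive sets'' $\{x \in A : g_x(c) > 0\}$ realized by points $c$, relative to the arrangement $\{H_x : x \in A\}$ of hyperplanes $H_x = \{c : g_x(c) = 0\}$ with normals $N_x$.

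First I would translate the hypotheses into general position. The assumption that $H$ restricted to every $X_0 \in [X]^n$ is $n$-dimensional says precisely that the evaluation matrix $[h_j(x)]_{x \in X_0,\, j}$ is invertible, i.e. that $\{N_x : x \in X_0\}$ is linearly independent; extending any $k$-subset with $k \le n$ to an $n$-subset of $X$ then shows that any $\le n$ of the normals are independent, so any $k \le n$ of the hyperplanes meet in a flat of codimension $k$. The hypothesis on $f_0$ supplies the complementary fact: if $n+1$ hyperplanes $H_{x_0},\dots,H_{x_n}$ shared a point $c^{*}$, then $f = \sum_j c^{*}_j h_j \in H$ would satisfy $f_0(x_i)-f(x_i)=0$ for $i=0,\dots,n$, contradicting that $f_0-f$ has at most $n$ zeros. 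Hence no $n+1$ hyperplanes have a common point, and the arrangement is in general position.

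With general position in hand, the classical region count for an arrangement of $m$ hyperplanes in $\mathbb{R}^n$ gives exactly $\sum_{i=0}^{n}\binom{m}{i} = \Phi_n(m)$ full-dimensional cells, and distinct cells plainly give distinct positive sets. The one genuinely delicate point --- and the step I expect to be the main obstacle --- is to verify that no \emph{extra} positive sets arise from points $c$ lying on some of the hyperplanes, so that $|\mathcal{C}(A)|$ equals the region count and is not merely bounded below by it. I would handle this by perturbation: given any $c$, let $Z = \{x \in A : g_x(c)=0\}$; general position forces $|Z| \le n$, so $\{N_x : x \in Z\}$ is independent and there is a direction $v$ with $\langle N_x, v\rangle > 0$ for all $x \in Z$. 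For small $\epsilon > 0$ the point $c + \epsilon v$ lies in the interior of a cell, preserves every strict sign of $c$, and pushes each $x \in Z$ strictly negative, so it realizes the same positive set as $c$. Thus every positive set is realized in a cell interior, giving $|\mathcal{C}(A)| = \Phi_n(m)$ for all finite $A$. Finally, since $\Phi_n(m)=2^m$ exactly when $m \le n$ while $\Phi_n(n+1)=2^{n+1}-1 < 2^{n+1}$, the class shatters every $n$-element set but no $(n+1)$-element set, whence $\mathrm{VC}(\mathcal{C}) = n$ and $\mathcal{C}$ is $n$-maximum.
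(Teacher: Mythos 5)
Your proof is correct and complete. One point of order: the paper offers no proof of this proposition at all --- it is imported from Floyd's thesis (Theorem 8.2) with a citation --- so there is no internal argument to compare yours against. What you have reconstructed is essentially the classical proof: dualize to the parameter space $\mathbb{R}^n$, observe that the two hypotheses say exactly that the hyperplanes $H_x$ ($x \in A$) form an arrangement in general position (any $k \le n$ of the normals independent, hence nonempty codimension-$k$ intersections; no point common to $n+1$ of the hyperplanes), and invoke the classical count $\sum_{i=0}^{n}\binom{m}{i}$ of regions of such an arrangement; this is the route Floyd's original argument takes, resting on Cover-style region counting. You also correctly identified and closed the one gap that a careless version of this argument leaves open: parameter points lying \emph{on} some of the hyperplanes might conceivably realize positive sets not realized by any open cell. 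Your perturbation --- move in a direction $v$ with $\langle N_x, v\rangle > 0$ for all $x$ in the zero set $Z$, which exists because $|Z| \le n$ forces $\{N_x : x \in Z\}$ to be linearly independent --- settles this, since for small $\epsilon$ it preserves every strict sign and pushes the zeros strictly negative, leaving the positive set unchanged. (Two points you use implicitly but which are automatic: $|X| \ge n$, needed to extend $k$-subsets to $n$-subsets, holds because $H \subseteq \mathbb{R}^X$ has dimension $n$; and each $N_x \ne 0$, so each $H_x$ really is a hyperplane, follows from the restriction hypothesis applied to any $n$-set containing $x$.)
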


Proposition \ref{P:P5} builds on the theorem by Dudley \cite{Du99} (Theorem 4.2.1) that families of the form $pos(f_0-H)$ have VC dimension $dim(H)$.

\begin{example}[Floyd \cite{Fl89}, p. 104.] \label{E:E3}
 Let $X$ be a subset of $\R^2$, let $H$ be the three-dimensional vector space of functions of the form $f((x,y))=a_3y+a_2x+a_1$, and let $f_0((x,y))=-x^2-y^2$.  Then $f_0((x,y))-f((x,y))=-x^2-y^2-a_3y-a_2x-a_1$, and $pos(f_0-f)$ consists of all points for which $x^2+y^2+a_3y+a_2x+a_1 <0$.  These are the points contained in the circle with center $(-a_2/2,-a_3/2)$, and with radius $\sqrt{(a_3/2)^2+(a_2/2)^2-a_1}$.  Restrict $X$ to a subset of $\R^2$ such that $H$ is a 3-dimensional vector space on every subset of $X$ of cardinality 3.  This is satisfied if, for every three points $(x_1,y_1),(x_2,y_2),(x_3,y_3)$ in $X$, the three vectors $(x_1,x_2,x_3),(y_1,y_2,y_3),$ and $(1,1,1)$ are linearly independent.  Thus, $X$ cannot contain 3 collinear points.  Further restrict $X$ to a subset of $\R^2$ such that at most 3 points lie on the circumference of any circle.  Then $pos(f_0-H)$ is a 3-maximum class on $X$.
\end{example}

The $X$ in the above example can be taken as dense in an extension of $\R^2$. In particular, add countably many new constants $a_i$, paired into countably many $2$-tuples $\abar_i$ and let $\Gamma$ in the language of ordered rings (with constants from $\R$) express
\begin{enumerate}
 \item The $\abar_i$ are dense in the order topology.
 \item The $a_i$ satisfy the conditions described in Example \ref{E:E3}.
\end{enumerate}
Then this consistent set of sentences will give $X$ as desired.

The linear behavior of the parameters in Example \ref{E:E3} is important.  

\begin{proposition}
 Let $\phi(x_1,x_2;z_1,z_2,z_3) = x_1^2+x_2^2+z_3x_2+z_2x_1+z_1 < 0$ as in Example \ref{E:E3}.  There is no dense $Y \sse M^3$, for any $\R \preceq \mcm$ so that $\mathcal{C}_{\phi^*}(Y)$ is maximum.
\end{proposition}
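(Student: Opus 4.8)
The plan is to reinterpret $\mathcal{C}_{\phi^*}(Y)$ geometrically and then defeat the Sauer bound by a cell-counting argument. Since $\phi^*((z_1,z_2,z_3);(x_1,x_2)) = \phi((x_1,x_2);(z_1,z_2,z_3))$, for a fixed point $\bbar = (x_1,x_2)\in M^2$ the set $\phi^*(Y,\bbar)$ is exactly $\{\zbar \in Y : \bbar \text{ lies inside the circle }\zbar\}$. Thus $\mathcal{C}_{\phi^*}(Y)$ is the \emph{dual} family of disks: its ground set $Y$ is a set of circles, and its concepts are the sets ``all circles of $Y$ that contain a given point'', as the point ranges over $M^2$. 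The whole argument rests on comparing the number of such containment patterns against $\Phi_3$.

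First I would pin down the VC dimension. For the lower bound, three circles in Venn position (pairwise crossing, with all eight regions of the arrangement nonempty) are shattered by points; this is a nonempty, definable, open condition on the nine parameters, so by density of $Y$ in $M^3$ the product $Y^3$ meets it and some triple from $Y$ is shattered, giving VC$(\mathcal{C}_{\phi^*}(Y)) \geq 3$. For the upper bound I use that any point $\bbar$ is assigned the subset of $A$ consisting of those circles of $A$ containing it, and this subset is constant on each face of the planar arrangement cut out by the circles of $A$; hence $|\mathcal{C}_{\phi^*}(Y)(A)|$ is at most the number of faces of that arrangement. By the Euler-characteristic estimate, $m$ circles partition the plane into at most $m^2 - m + 2$ regions, so no four circles can be shattered and VC$(\mathcal{C}_{\phi^*}(Y)) = 3$; thus ``maximum'' here means $3$-maximum.

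Now for the main step: choose any four distinct circles $A \subseteq Y$ (these exist since $Y$, being dense, is infinite). The same face-count gives $|\mathcal{C}_{\phi^*}(Y)(A)| \leq 4^2 - 4 + 2 = 14$, whereas $3$-maximality would force $|\mathcal{C}_{\phi^*}(Y)(A)| = \Phi_3(4) = 1+4+6+4 = 15$. Since $14 < 15$, the Sauer bound fails on $A$ and $\mathcal{C}_{\phi^*}(Y)$ is not maximum; as this holds for every dense $Y$, the proposition follows. (Density is used only to fix VC$=3$; the failure at $|A|=4$ needs merely four circles.) I expect the main obstacle to be the cell-counting bound in the real closed field $\mcm$ rather than in $\R$: one must check that ``four circles realize at most $14$ distinct containment patterns'' persists in $M^2$. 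This is handled by transfer, since the property is a first-order sentence over ordered fields once the circles are presented as the zero sets of the quadratics $x_1^2+x_2^2+z_3x_2+z_2x_1+z_1$ and one quantifies over the at most $2^4$ possible patterns; alternatively one runs the Euler argument directly in the o-minimal structure. A secondary point needing care is the step from ``distinct patterns'' to ``faces'': distinct faces may share a pattern, but this only lowers the count, so the inequality $|\mathcal{C}_{\phi^*}(Y)(A)| \le 14$ is safe.
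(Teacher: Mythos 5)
Your proof is correct, and it takes a genuinely different route from the paper's. The paper argues by contradiction via Proposition \ref{P:P4}: assuming $\mathcal{C}_{\phi^*}(Y)$ is maximum, the formula would satisfy NFCP, and one then exhibits four regions $A_1,\ldots,A_4$ of the plane, each cut out by finitely many $\pm\phi$-instances with parameters from $Y$, such that any three of them meet but all four have empty intersection; the union of the defining types is $3$-consistent, hence consistent by NFCP, hence (being finite and finitely realizable in $\R$) realized in $\R$ --- a contradiction. Your argument instead defeats the maximum count head-on: any four circles admit at most $4^2-4+2=14$ containment patterns, which falls short of $\Phi_3(4)=15$, with density used only to pin the VC dimension at $3$ (via a Venn triple) and a transfer argument moving the cell bound from $\R^2$ to $M^2$. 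Two points of comparison. First, your value $\mathrm{VC}(\phi^*)=3$ is the correct one; the paper asserts $\mathrm{VC}(\phi^*)=2$, but three circles in Venn position are dually shattered --- exactly your lower-bound argument --- so the paper's proof as written is calibrated to the wrong dimension. (Run at the correct $d=3$, its method would need $4$-NFCP and a configuration of five regions, every four of which meet while all five do not; such configurations exist, so the strategy is repairable, but the printed proof does not do this.) Second, your counting argument proves more than the statement asks: the dual family of these disks fails to be $3$-maximum over \emph{every} ground set of at least four circles, dense or not, which makes transparent that density serves only to exclude degenerate $Y$ of smaller VC dimension. What the paper's route buys is thematic --- it shows Proposition \ref{P:P4} acting as a Helly-type principle, which is the point of Section \ref{S:S3}; what yours buys is elementarity, self-containedness, and immunity to the dimension miscount.
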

\begin{proof}
 The reader may check that VC($\phi^*$)=2.  Suppose by way of contradiction that there is an extension $\mcm$ of $\R$ and a set $Y \sse M^3$, dense in the order topology, so that $\mathcal{C}_{\phi^*}(Y)$ is 2-maximum.  Let $A_1,A_2,A_3,$ and $A_4$ be subsets of the plane so that $A_j \cap A_i \cap A_l \neq \emptyset$ for any $i,j$, and $l$, but $A_1 \cap A_2 \cap A_3 \cap A_4 = \emptyset$.  We may assume without loss that for each $i=1,2,3,4$ there is a finite set $p_i(\xbar)$ of $\pm$-instances of $\phi(\xbar;\zbar)$ over $Y$ such that $A_i = p_i(\R^2)$. Then the set of formulas $\Sigma(\xbar) = p_1 \cup p_2 \cup p_3 \cup p_4$ is 3-consistent.  By Proposition \ref{P:P4}, $\Sigma(\xbar)$ is consistent.  Since $\Sigma$ is finite, and without loss finitely realizable in $\R$, it is realized in $\R$.  But this contradicts the fact that the $A_i$ have $A_1 \cap A_2 \cap A_3 \cap A_4= \emptyset$.
\end{proof}

Thus linearly parameterized semi-algebraic families are maximum on sets in ``general position,'' but those with non-linear parameterizations can strongly fail to be maximum.

Note that if we make a class $\gc$ from the topological ``frontiers'' of the objects described in Example \ref{E:E3}, then we get a stable maximum family of the type described in Theorem \ref{T:T4}.

\section{} \label{S:S4}
%
%
%
%

Recall that a theory $T$ is model complete if whenever $\mcm,\mcn \models T$ and $\mcn \sse \mcm$, it follows $\mcn \preceq \mcm$.  The model completeness of $\langle \bbd{Q}, < \rangle$ can be generalized as follows.

\begin{proposition} \label{P:P7}

Fix $d \in \omega \cup \{\infty\}$. We will show that if $L = \{R(x,y)\}$ is a language with a single binary relation, and $T$ a $L$-theory with the axioms
\begin{enumerate}
 \item $R(x,y)$ $d$-maximum   
 \item $R(x,y)$ is symmetric (or antisymmetric)
 \item No finite $R(x,y)$-type is algebraic.  
\end{enumerate}
then $T$ is model complete.
\end{proposition}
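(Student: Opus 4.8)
The plan is to verify model completeness through Robinson's test: it suffices to show that whenever $\mcn \sse \mcm$ are models of $T$, the submodel $\mcn$ is existentially closed in $\mcm$, i.e.\ for every quantifier-free $\psi(\xbar;\ybar)$ and every $\abar \in N$, if $\mcm \models \exists \xbar\, \psi(\xbar;\abar)$ then $\mcn \models \exists \xbar\, \psi(\xbar;\abar)$. Since the language contains only $R$ and equality, the quantifier-free type of a witness tuple $\bar m \in M^{|\xbar|}$ over $\abar$ is just a $\pm R$-labeled configuration (a graph with prescribed equalities), and it is enough to realize that configuration inside $\mcn$. I would build such a realization greedily, adjoining one coordinate at a time, so that each step reduces to realizing a single complete $\pm R$-type $q(x)$ in one variable over a finite parameter set $B_0 \sse N$.

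The heart of the argument is this single-variable realization combined with a coincidence of forbidden labels between $\mcn$ and $\mcm$. First observe that by axiom (2), $R^*$ is again $d$-maximum: if $R$ is symmetric then $R^*=R$, while if $R$ is antisymmetric then $\mathcal{C}_{R^*}$ is obtained from $\mathcal{C}_R$ by complementing traces, which preserves the counting function $\Phi_d$. Hence $R$ is $d$-*maximum and Propositions~\ref{P:P2} and~\ref{P:P4} apply. The purpose of passing to $R^*$ is that ``$e$ realizes the $\pm R$-pattern $\eta$ over $B_0$'' is exactly the assertion that the corresponding trace lies in $\mathcal{C}_{R^*}(B_0)$. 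Since maximum-ness is a property of $T$, both $\mcn$ and $\mcm$ satisfy $|\mathcal{C}_{R^*}(B_0)| = \Phi_d(|B_0|)$; in particular, on each $(d+1)$-subset exactly one pattern is forbidden, as $\Phi_d(d+1)=2^{d+1}-1$. I would then run the decisive counting step: each existential sentence $R^*_\eta$ persists upward from $\mcn$ to $\mcm$, so the patterns realized over $B_0$ in $\mcn$ form a subset of those realized in $\mcm$; but both collections have the same size $2^{d+1}-1$ on every $(d+1)$-subset, which forces the forbidden label on each $(d+1)$-subset of $N$ to be identical in $\mcn$ and $\mcm$. By Proposition~\ref{P:P3} (maximum implies maximal on finite sets, via Sauer's Lemma), a pattern over $B_0$ is realized precisely when it induces no forbidden label on any $(d+1)$-subset; hence the realizable $1$-types over $B_0$ coincide in $\mcn$ and $\mcm$, and any $1$-type over $B_0 \sse N$ witnessed in $\mcm$ is witnessed in $\mcn$. (For $d=\infty$ there are no forbidden labels and the relation is generic, so realization is immediate.)

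It remains to keep the greedy construction consistent. When adjoining the $j$-th element I must know that the required $1$-type over $B_0 = \abar \cup \{n_1,\dots,n_{j-1}\}$ is realized in $\mcm$, so that the previous paragraph applies. Here I would use that the forbidden label on a $(d+1)$-tuple depends only on its quantifier-free type: since the already-chosen $n_i$ realize the same quantifier-free type over $\abar$ as the witnesses $m_i$, the local $(\le d{+}1)$-constraints transported from the $\bar m$-configuration (where $m_j$ realizes them) remain non-forbidden after the substitution of $n_i$ for $m_i$, and $(d+1)$-NFCP (Proposition~\ref{P:P4}) then upgrades this local consistency to consistency of the full type $q(x)$. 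Finally, axiom (3) supplies the inequalities: because no finite $R$-type is algebraic, each realized type has infinitely many realizations, so at every stage a fresh witness distinct from $\abar$ and from $n_1,\dots,n_{j-1}$ can be found, matching the distinctness pattern of $\bar m$, while the diagonal atom $R(x,x)$ is pinned down by axiom (2).

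The step I expect to be the main obstacle is the consistency maintenance in the greedy construction of the third paragraph: making rigorous that the forbidden label is an invariant of the quantifier-free type, so that the local constraints carried over from the $\bar m$-configuration stay simultaneously satisfiable after substituting the submodel elements $n_i$. The cardinality-matching device of the second paragraph --- upward persistence of the sentences $R^*_\eta$ together with the exact count $2^{d+1}-1$ --- is the crucial mechanism that renders the forbidden labels absolute between $\mcn$ and $\mcm$, and thereby makes both the single-variable realization and this transfer go through.
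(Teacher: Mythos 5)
Your single-witness step is the same mechanism as the paper's: for a finite $B\sse N$, the $R$-patterns realized in $\mcn$ form a subset of those realized in $\mcm$, both trace families have the full Sauer count because axiom (1) is part of the theory, so the two families coincide, and axiom (3) then supplies a realization distinct from the parameters. The paper runs exactly this counting argument (it compares $|\mathcal{C}_R(\mathrm{range}(\abar))^\mcn|$ with $|\mathcal{C}_R(\mathrm{range}(\abar))^\mcm|$ directly rather than routing through forbidden labels and Proposition \ref{P:P3}, which is cosmetic), and it uses axiom (2) the same way, to collapse the two orientations $R(a_i,v)$ and $R(v,a_i)$ into one. The structural difference is that the paper stops there: its quantifier-free formula $\sigma(\vbar,v)$ has a single existential variable, i.e., it invokes Robinson's test only in its one-quantifier form, whereas you attempt downward transfer of arbitrary existential formulas by a greedy, coordinate-by-coordinate construction.

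That greedy construction is where your proposal has a genuine gap, and it is the one you flag yourself: the claim that the forbidden label of a $(d+1)$-tuple is an invariant of its quantifier-free type. Nothing in the paper gives this. A forbidden label is existential-type data ($\eta^*$ is forbidden iff $\models\neg\exists x\bigwedge_i R(x,a_i)^{\eta^*(i)}$), and two tuples of $\mcm$ with the same quantifier-free type need not satisfy the same existential formulas --- an invariance of this kind is essentially a fragment of the model completeness being proved, so assuming it is close to circular. Propositions \ref{P:P2}--\ref{P:P4} say nothing about how forbidden labels vary from tuple to tuple, and the proposed repair via $(d+1)$-NFCP does not close the gap: Proposition \ref{P:P4} yields consistency, i.e., realization in the monster model, whereas your induction needs the transported type realized in $\mcn$ itself; by Proposition \ref{P:P3} applied inside $\mcn$, that is equivalent to the transported constraints avoiding the forbidden labels computed in $\mcn$, which is precisely the unproven invariance. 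Two smaller inaccuracies: antisymmetry ($R(x,y)\wedge R(y,x)\rightarrow x=y$) does not make $\mathcal{C}_{R^*}$ the trace-complement of $\mathcal{C}_R$ (that needs totality, as for a linear order), and axiom (2) alone does not pin down the diagonal atom $R(v,v)$ --- symmetry says nothing about loops; it is antisymmetry combined with axiom (3) that forces irreflexivity. To be fair, your instinct that the multi-variable case requires an argument is sound: the paper itself silently relies on the one-variable form of Robinson's test and ignores diagonal atoms; but the proposal as written does not supply the missing step.
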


\begin{proof}

We use Robinson's test.  Suppose $\mcn \sse \mcm$ and $\mcn,\mcm \models T$.  Consider the q.f. formula

$$\sigma(\vbar,v) = \bigwedge_{v_i \in \vbar}(v_i = v)^{\eta_1(i)} \wedge \bigwedge_{v_i \in \vbar}R(v_i,v)^{\eta_2(i)} \wedge \bigwedge_{v_i \in \vbar}R(v,v_i)^{\eta_3(i)}$$

where $\eta_j:|\vbar| \rightarrow 2$, for all $j \in \{1,2,3\}$.
\\

Let $\abar \in N^{|\vbar|}$, and suppose $\mcm \models \exists v \sigma(\abar,v)$.  We must show $\mcn \models \sigma(\abar,b)$ for some $b \in N$.  Without loss of generality, we may assume that $\eta_1$ is the zero function.  We first show that 

$$\mcn \models \exists v \bigwedge_{a_i \in \abar}R(a_i,v)^{\eta_2(i)} \wedge \bigwedge_{a_i \in \vbar}R(v,a_i)^{\eta_3(i)}$$

Since $R(x,y)$ is assumed to be symmetric (or antisymmetric) and $\sigma$ is consistent, the above statement holds if and only if 

\begin{equation}
\mcn \models \exists v \bigwedge_{a_i \in \vbar}R(v,a_i)^{\eta_2(i)} \label{eqn:*L}
\end{equation}

But this last statement must hold.  For if not,

$$|\mathcal{C}_R(range(\abar))^\mcn| < |\mathcal{C}_R(range(\abar))^\mcm|$$


contradicting that $R$ is maximum in $\mcn$.

By condition 3 and equation (1), $\mcn$ has a witness $b$ to $\exists v \bigwedge_{a_i \in \abar}R(v,a_i)^{\eta_2(i)}$ such that 

$$\mcn \models \bigwedge_{a_i \in \vbar}(a_i \neq b)$$

Thus

$$\mcn \models \sigma(\abar,b)$$

and consequently $T$ is model complete.  
\end{proof}
If (2) in Proposition \ref{P:P7} is replaced with the assumption that $R(x,y)$ is 2-sorted, or that $x$ and $y$ are otherwise incompatible, then an analog of the proposition goes through, if we further assume that $R^*(y,x)$ is maximum.
%

\section{UDTFS} \label{S:S5}

If $B \sse M^{|\ybar|}$ and $\phi(\xbar;\ybar)$ is a partitioned formula, a complete $\phi$-type over $B$ is any consistent set of formulas $$p(\xbar)=\{\phi(\xbar;\bbar)^{\eta(\bbar)}: \bbar \in B\}$$
for some $\eta:B \rightarrow 2$.  We let $S_\phi(B)$ represent the set of all complete $\phi$-types over $B$.

\begin{definition}
  Let $\phi(\xbar,\ybar)$ a partitioned formula.  We say that $\phi$ has uniformly definable types over finite sets (UDTFS) if for some $N \in~\omega$ there exists a set of formulas $\{\psi_l(\ybar,\ybar_0,\ldots, \ybar_{n-1}):l \in N\}$ such that for any any finite $B \sse M^{|\ybar|}$ and any $p \in S_{\phi}(B)$ there is $l \in N$ and $\bbar_0,\ldots, \bbar_{n-1} \in B$ such that for all $\bbar \in B$,
$$ \phi(\xbar,\bbar) \in p \iff \models \psi_l(\bbar,\bbar_0,\ldots, \bbar_{n-1}).$$
\end{definition}

The definition of UDTFS was based on the notion of a compression scheme from computational learning theory \cite{JoLa10}. 
Warmuth and Littlestone \cite{LiWa86} say that
$\C$ admits a {\em $d$-dimensional compression\/} if, given
any finite subset $F$ of $X$, and any set $A\in\C$,
there is a $d$-element subset $S$ of $F$
such that the set $A\cap F$ can be recovered from
the sets $S\cap A$ and $S\setminus A$.

\begin{example}[Warmuth \& Littlestone] \label{E:E5}
 Let $\gc$ be the set of all solid axis-parallel rectangles in the plane and $F$ a finite set of points.  Fix a rectangle $R \in \gc$.  Let $S$ be the topmost,leftmost,rightmost and lowest points in $F \cap R$.  Let $\tilde{R} = \bigcap\{R' \in \gc: S \sse R'\}$.  Then $\tilde{R}\cap F=R\cap F$.
\end{example}
Note that in Example \ref{E:E5}, $S \setminus R$ was not needed, and it is not necessary that $\tilde{R} \in \gc$.

A more technically useful tool is an extended compression scheme, also due to Warmuth and Littlestone, and defined as follows.

 In the following, we identify $\mathcal{C} \sse 2^X$ with $\{f_c:X \rightarrow 2: c \in \mathcal{C}\}$, where $f_c(x)=1 \iff x \in c$.  Furthermore, a function is identified with its graph, so that $f_1 \sse f_2$ iff $f_1$ is a restriction of $f_2$.

For $B\subseteq X$, the notation $\mathcal{C}|_B$ denotes
the set of restrictions $\{f|_B:f\in\mathcal{C}\}$
and 
$$\mathcal{C}|_{\rm fin}=\bigcup\{\mathcal{C}|_B:\hbox{$B$ a finite subset of $X$ with
 $|B|\ge 2$}\}$$

\begin{definition}
 Fix $\mathcal{C} \sse {}^X\{0,1\}$.  $\mathcal{C}$ is said to have an {\em extended $d$-compression\/} 
if there is a
{\em compression function\/} $\kappa: \mathcal{C}|_{\rm fin} \ra [X]^{\leq d}$
and a finite set $\RR$ of {\em reconstruction functions} 
$\rho: [X]^{\leq d}\ra {}^X\{0,1\}$ such that for every $f\in\mathcal{C}|_{\rm fin}$
\begin{enumerate}
 \item $\kappa(f)\subseteq \dom(f)$
 \item $f\subseteq \rho(\kappa(f))$ for at least one $\rho\in\RR$.
\end{enumerate}
We say that $\mathcal{C}$ has an {\em extended $d$-sequence compression\/} if there
there is a compression function $\kappa: \mathcal{C}|_{\rm fin} \ra X^d$
and a finite set $\RR$ of {\em reconstruction functions} 
$\rho: X^d\ra {}^X\{0,1\}$ such that for every $f\in\mathcal{C}|_{\rm fin}$,
$\Range(\kappa(f))\subseteq\dom(f)$, and $f\subseteq \rho(\kappa(f))$ for at least one $\rho\in\RR$.

\end{definition}

The existence of either of these $d$-compressions is equivalent.  See \cite{JoLa10} for proofs.


\begin{theorem}[\cite{FlWa95}, Theorem 11]
 Suppose $\mathcal{C} \sse 2^X$ for a set $X$ is $d$-maximum.  Then $\mathcal{C}$ has a $d$-dimensional compression.
\end{theorem}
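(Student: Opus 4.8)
The plan is to reduce to the finite case and then induct on the size of the ground set, using the two canonical operations on a maximum class. Since a $d$-dimensional compression is only required for finite samples $F \sse X$, and since the restriction $\mathcal{C}(F)$ of a $d$-maximum class to a finite $F$ is again $d$-maximum, it suffices to produce, for each finite $F$, a compression of $\mathcal{C}(F)$ into a set $S$ of size $\leq d$ together with a reconstruction that agrees with the compressed concept on all of $F$. Fix $x \in F$ and consider the restriction $\mathcal{C}(F \setminus \{x\})$ and the reduction $\mathcal{C}^x = \{c \sse F \setminus \{x\} : c \in \mathcal{C}(F) \text{ and } c \cup \{x\} \in \mathcal{C}(F)\}$. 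The first step is a structural lemma: $\mathcal{C}(F \setminus \{x\})$ is $d$-maximum and $\mathcal{C}^x$ is $(d-1)$-maximum on $F \setminus \{x\}$. This is a counting argument. The map $c \mapsto c \setminus \{x\}$ is two-to-one exactly over $\mathcal{C}^x$ and one-to-one elsewhere, so $|\mathcal{C}(F)| = |\mathcal{C}(F\setminus\{x\})| + |\mathcal{C}^x|$; since $\mathcal{C}^x$ has VC dimension at most $d-1$ (shattering a $d$-set in $\mathcal{C}^x$ would shatter a $(d+1)$-set in $\mathcal{C}(F)$), and $\Phi_d(m) = \Phi_d(m-1) + \Phi_{d-1}(m-1)$, Sauer's Lemma forces both summands to be maximal.

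The second step is the inductive construction, by induction on $|F|$. The base case is $|F| \leq d$, where $\mathcal{C}(F) = 2^F$ and one takes $S = F$, recovering $A \cap F$ directly from its labels. For the inductive step, given $A \in \mathcal{C}(F)$, I distinguish cases according to whether $A \setminus \{x\} \in \mathcal{C}^x$. If it is, then both values of $x$ occur over $A \setminus \{x\}$; I compress $A \setminus \{x\}$ inside the $(d-1)$-maximum class $\mathcal{C}^x$ on $F \setminus \{x\}$ to a set $S'$ of size $\leq d-1$ (available by induction, as $|F \setminus \{x\}| < |F|$) and set $S = S' \cup \{x\}$, recording the true label of $x$. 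If $A \setminus \{x\} \notin \mathcal{C}^x$, then only one value of $x$ is consistent with $A \setminus \{x\}$, so that value is forced; I compress $A$ inside the $d$-maximum class $\mathcal{C}(F \setminus \{x\})$ to a set $S$ of size $\leq d$, spending no point on $x$. In either case $|S| \leq d$.

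The reconstruction is the delicate part: from $(S \cap A,\, S \setminus A)$ alone it must output a concept agreeing with $A$ on all of $F$, with no knowledge of $F$, of $A$, or of which case occurred. I would arrange it to always return a member of the unique maximal extension $\bar{\mathcal{C}}$ of $\mathcal{C}$ (unique by the corollary to Proposition \ref{P:P0}), so that the values at unsampled points are pinned down by the forbidden-label criterion of Proposition \ref{P:P3}: each coordinate takes the unique value that avoids completing a forbidden label. Whether $x \in S$ encodes which case held; when $x \in S$ its label is read off directly and the other coordinates are recovered by the reconstruction inherited from $\mathcal{C}^x$, while when $x \notin S$ the concept on $F \setminus \{x\}$ is recovered from $\mathcal{C}(F \setminus \{x\})$ and the value at $x$ is set to the one keeping the result in $\bar{\mathcal{C}}$. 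The main obstacle I anticipate is precisely verifying that this single, sample-only reconstruction is well defined across the recursion and genuinely recovers $A$ at every point of $F$, including those outside $S$: one must check that the value forced by avoidance of forbidden labels (Propositions \ref{P:P2} and \ref{P:P3}) agrees with $A$'s actual value (here the hypothesis $A \setminus \{x\} \notin \mathcal{C}^x$ is exactly what makes that value forced) and that the two cases are disambiguated from the compressed data alone. The counting lemma of the first step and the uniqueness of $\bar{\mathcal{C}}$ are what keep this disambiguation consistent; the remainder is a routine induction.
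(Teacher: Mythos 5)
Your overall architecture can be salvaged, but as written there is a genuine gap, and it sits exactly where you placed the difficulty: the reconstruction. In a $d$-dimensional compression the recovery map may see only the labeled pair $(S \cap A,\, S \setminus A)$; it sees neither $F$, nor the peeled point $x$, nor the classes $\mathcal{C}^x$ and $\mathcal{C}(F\setminus\{x\})$, all of which depend on $F$. Consequently ``whether $x \in S$ encodes which case held'' is not usable information: in your Case 2 the peeled point is not in $S$ at all, so the reconstructor cannot know which point was peeled or that any peeling occurred, and a reconstruction ``inherited from $\mathcal{C}^x$'' or ``recovered from $\mathcal{C}(F \setminus \{x\})$'' is not a legitimate reconstruction function. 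Your substitute criterion --- at each unsampled $y$ take ``the unique value that avoids completing a forbidden label'' --- is also not well defined: if the observed labels $\eta$ on $S$ differ from the restriction to $S$ of the forbidden label $\eta^*$ of $S \cup \{y\}$, then \emph{both} values at $y$ avoid the forbidden label. The reconstruction that works (implicit in the paper's Lemma \ref{L:L2}, and in Floyd--Warmuth) is $F$-independent and quantifies over all patterns rather than the observed one: set $h \upharpoonright S = \eta$, and for $y \notin S$ let $h(y)$ be the unique $t$ such that every pattern $\mu : S \to 2$ together with value $t$ at $y$ is realized by some concept of $\mathcal{C}$; equivalently $h(y) = 1 - \eta^*(y)$, with $\eta^*$ the forbidden label on $S \cup \{y\}$ (Proposition \ref{P:P2}). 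This $h$ recovers $A$ on $F$ whenever $S$ is \emph{internally shattered} by the type of $A$ over $F$, i.e.\ whenever every modification of $A \cap F$ on $S$ stays in $\mathcal{C}(F)$.

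So your induction must actually prove that the set $S$ it constructs is internally shattered, and the two cases are not symmetric. Case 1 lifts cleanly: membership of $A \setminus \{x\}$ in $\mathcal{C}^x$ supplies both labels at $x$, so internal shattering of $S'$ in $\mathcal{C}^x$ gives internal shattering of $S' \cup \{x\}$ in $\mathcal{C}(F)$. Case 2 is the crux and is not ``routine'': internal shattering of $S$ by $A \setminus \{x\}$ inside $\mathcal{C}(F\setminus\{x\})$ says nothing a priori about the labels at $x$ of the concepts realizing the \emph{other} patterns on $S$, and the forcing you invoke ($A\setminus\{x\} \notin \mathcal{C}^x$ forces $A$'s label at $x$ given \emph{all of} $A \setminus \{x\}$) is weaker than what the reconstruction needs (forcing given only the labeled set $S$); conflating these two notions of ``forced'' is precisely the missing step. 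The bridge is your structural lemma iterated $d$ times: $\mathcal{C}(F)^S$, the class of functions on $F \setminus S$ all of whose extensions lie in $\mathcal{C}(F)$, is $0$-maximum, hence a single function $g_S$, and similarly for $\mathcal{C}(F\setminus\{x\})^S$; internal shattering in the restriction then says $A = g_S$ off $S \cup \{x\}$, and if $A(x) \neq g_S(x)$ the extension of $g_S$ by $A$'s labels on $S$ is exactly $A$ flipped at $x$ and lies in $\mathcal{C}(F)$, contradicting $A \setminus \{x\} \notin \mathcal{C}^x$; hence $A$ agrees with $g_S$ on all of $F \setminus S$, which is internal shattering. Two smaller points: your counting certifies the Sauer bound for $\mathcal{C}^x$ only on the full set $F \setminus \{x\}$, and one must add the standard observation that on a finite domain this, together with the VC bound, already implies maximum; and note that once completed, your recursion constitutes a proof of Welzl's lemma (Lemma \ref{L:L3}), which both this paper and Floyd--Warmuth quote without proof --- so the repaired argument would in fact be more self-contained than the paper's route, which is to cite Welzl and reconstruct exactly as above.
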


This was later improved to an extended $d$-compression with $|\RR| = 1$ in \cite{KuWa07}.  An extended $d$-compression with $|\RR|=1$ is usually called an ``unlabeled'' $d$-compression scheme.  While the improved result does not translate to a first-order statement (ie UDTFS) the original result does, as we now show.  

\begin{definition}
 Let $\phi(\xbar;\ybar)$ a partitioned formula and $B \sse M^{|\ybar|}$ finite. Say that $p \in S_\phi(B)$ internally shatters $A \sse B$ if $p \upharpoonright_{B \setminus A}$ has $2^{|A|}$ extensions to $B$.
\end{definition}

Define the independence dimension of $\phi(\xbar;\ybar)$ as VC($\phi^*)$, and say that $A \sse M^{|\ybar|}$ is independent if $A$ is shattered by $\phi^*(\ybar;\xbar)$.  

\begin{lemma} \label{L:L2}
 Suppose $\phi(\xbar;\ybar)$ is a partitioned formula with independence dimension $d$.  Suppose further that for any finite $B \sse M^{|\ybar|}$ and $p \in S_\phi(B)$, $p$ internally shatters some $A \in [B]^d$.  Then $\phi(\xbar;\ybar)$ is UDTFS.
\end{lemma}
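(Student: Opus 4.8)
The plan is to write down an explicit finite family of defining formulas whose parameters are the $d$ tuples of the internally shattered set supplied by the hypothesis. Given a finite $B$ and $p \in S_\phi(B)$, fix $A = \{\abar_0,\ldots,\abar_{d-1}\} \in [B]^d$ internally shattered by $p$; these will be the parameters $\ybar_0,\ldots,\ybar_{d-1}$ (so $n = d$), and the index $l$ will record the restricted type $\eta_0 = p\upharpoonright A$, viewed as a function $d \to 2$. Thus the family is indexed by the $2^d$ functions $\eta_0 : d \to 2$. The content of the lemma is that $A$ together with $\eta_0$ already determines $p$ on all of $B$, uniformly.

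For $\bbar = \abar_i \in A$ there is nothing to do: $\phi(\xbar;\abar_i) \in p$ iff $\eta_0(i) = 1$, and a formula detects this by testing the parameter equalities $\ybar = \ybar_i$. The real work is the case $\bbar \in B \setminus A$, and here is the step I expect to be the crux. Internal shattering of $A$ by $p$ says that every one of the $2^d$ patterns on $A$ is realizable jointly with $p\upharpoonright(B\setminus A)$, in particular jointly with the actual label $\epsilon = p(\bbar)$ of $\bbar$. I claim $\epsilon$ is the \emph{unique} value $e \in 2$ with this property: were both labels of $\bbar$ compatible with every pattern on $A$, then all $2^{d+1}$ patterns on $A \cup \{\bbar\}$ would be realizable, so $A \cup \{\bbar\}$ would be shattered by $\phi^*$; but $|A \cup \{\bbar\}| = d+1$ exceeds VC$(\phi^*) = d$, a contradiction. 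Hence the label of $\bbar$ is recovered as the one value of $e$ for which $A$ stays internally shattered, and getting this uniqueness exactly right (against the dimension bound) is the main obstacle.

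It remains to render the claim syntactically. Realizability of a pattern $\eta : d \to 2$ on $A$ together with label $1$ on $\bbar$ is precisely $\phi_{\eta^{+1}}(\ybar_0,\ldots,\ybar_{d-1},\ybar)$, where $\eta^{+e} : d+1 \to 2$ denotes the extension of $\eta$ with value $e$ at $d$. By the previous paragraph, for $\bbar \notin A$ we have $\phi(\xbar;\bbar)\in p$ iff all such instances with label $1$ hold. I would therefore take
$$\psi_{\eta_0}(\ybar;\ybar_0,\ldots,\ybar_{d-1}) \;=\; \Big(\bigvee_{i:\,\eta_0(i)=1}\ybar = \ybar_i\Big)\;\vee\;\Big(\bigwedge_{i<d}\ybar \neq \ybar_i \;\wedge\; \bigwedge_{\eta:\,d\to 2}\phi_{\eta^{+1}}(\ybar_0,\ldots,\ybar_{d-1},\ybar)\Big),$$
and verify in the two cases above that $\phi(\xbar;\bbar)\in p \iff \models \psi_{\eta_0}(\bbar;\abar_0,\ldots,\abar_{d-1})$ for every $\bbar \in B$, taking $\eta_0 = p\upharpoonright A$. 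Finally I would dispose of the degenerate case $|B| < d$, where no $A \in [B]^d$ exists, by adjoining the $2^d$ ``listing'' formulas $\bigvee_{i\in S}\ybar = \ybar_i$ for $S \subseteq d$, which define any type over fewer than $d$ points outright. The union of the two families is finite, witnessing UDTFS with $N = 2^{d+1}$ and $n = d$.
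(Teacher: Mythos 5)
Your proof is correct and follows essentially the same route as the paper's: index the defining formulas by the $2^d$ patterns $\eta_0 = p\upharpoonright A$, handle $\bbar \in A$ by equality tests, and for $\bbar \notin A$ use the formula asserting that a positive label on $\bbar$ is jointly realizable with every pattern on $A$, with correctness resting on exactly the same uniqueness argument (two compatible labels would shatter $A \cup \{\bbar\}$, exceeding the independence dimension). Your explicit treatment of the degenerate case $|B| < d$ via listing formulas is a minor addition the paper glosses over, but the core argument is identical.
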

\begin{proof}
 Let $B$, $p$ and $A$ be given. Let $\bbar \in B \setminus A$ and consider $\phi(\xbar;\bbar)^t$.  Since $|A|=d$, there must be exactly one value for $t \in 2$ for which $\phi(\xbar;\bbar)^t$ is consistent with any $q \in S_\phi(A)$, namely the value for which $\phi(\xbar;\bbar)^t \in p$.  Otherwise $A \cup \{\bbar\}$ would be independent, contradicting $Idim(\phi)=d$.  
Define $$\theta(\ybar;\ybar_0,\ldots,\ybar_{d-1}) = \bigwedge_{\eta:d \rightarrow 2} \exists \xbar \left ( \phi(\xbar;\ybar) \wedge  \bigwedge_{i \in d} \phi(\xbar;\ybar_i)^{\eta(i)} \right )$$ 

For $\eta: d \rightarrow 2$, define 

$$\psi_\eta(\ybar;\ybar_0,\ldots,\ybar_{d-1}) = \bigwedge_{i \in d}\left ( \ybar = \ybar_i  \rightarrow (\ybar=\ybar)^{\eta(i)} \right ) \wedge \left ( \bigwedge_{i \in d}  \ybar \neq \ybar_i \right ) \rightarrow \theta(\ybar;\ybar_0,\ldots,\ybar_{d-1})$$

Now for any $p \in S_\phi(B)$, if $\abar_1,\ldots,\abar_d$ is internally shattered by $p$, then for some  $\eta: d \rightarrow 2$ and all $\bbar \in B$,
$$ \phi(\xbar,\bbar) \in p \iff \models \psi_\eta(\bbar,\abar_1,\ldots, \abar_{d}).$$

The formulas $\psi_\eta(\ybar;\ybar_0,\ldots,\ybar_{d-1})$ suffice.
\end{proof}

The fact that all $d$-*maximum formulas satisfy the hypothesis in Lemma \ref{L:L2} is attributed to Emo Welzl \cite{We87}.  See also Theorem 10 in \cite{FlWa95}.

\begin{lemma}[Welzl, 1987] \label{L:L3}
 Suppose $\phi(\xbar;\ybar)$ is a $d$-*maximum partitioned formula. Then for any finite $B \sse M^{|\ybar|}$ and $p \in S_\phi(B)$, $p$ internally shatters some $A \in [B]^d$. 
\end{lemma}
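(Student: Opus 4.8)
The plan is to reduce the statement to a purely combinatorial fact about maximum set systems and then prove that by induction. First I would pass from types to concepts. Since $\phi$ is $d$-*maximum, for any finite $B\subseteq M^{|\ybar|}$ the family $\mathcal{C}_{\phi^*}(B)=\mathcal{C}_{\phi^*}(\mcm)(B)$ is $d$-maximum on $B$ (it has size $\Phi_d(|B|)$ by hypothesis), and each $p\in S_\phi(B)$ corresponds to the concept $c_p=\{\bbar\in B:\phi(\xbar;\bbar)\in p\}\in\mathcal{C}_{\phi^*}(B)$, with consistency of a partial type being exactly membership of the corresponding set in the family. Unwinding the definition, $p$ internally shatters $A\in[B]^d$ precisely when $\{c_p\,\Delta\,S:S\subseteq A\}\subseteq\mathcal{C}_{\phi^*}(B)$, i.e. when $c_p$ is the corner of a full combinatorial $d$-cube with edge-directions $A$. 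So it suffices to prove: in any finite $d$-maximum class, every concept lies in a full $d$-subcube (we may assume $|B|\ge d$, the other case being vacuous). By Proposition \ref{P:P3.5} I may translate by $c_p$ and assume the chosen concept is $\emptyset$, so the goal becomes the existence of a $d$-set $A$ with $2^A\subseteq\mathcal{C}$.

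Next I would set up the two standard operations on maximum classes. For a fixed $b$, let $\mathcal{C}-b=\{c\cap(B\setminus\{b\}):c\in\mathcal{C}\}$ and $\mathcal{C}^b=\{c\subseteq B\setminus\{b\}:c,\,c\cup\{b\}\in\mathcal{C}\}$. Combining $|\mathcal{C}|=|\mathcal{C}-b|+|\mathcal{C}^b|$ with the Pascal identity $\Phi_d(m)=\Phi_d(m-1)+\Phi_{d-1}(m-1)$, with Sauer's Lemma, and with the observation that $\mathcal{C}^b$ has VC dimension $\le d-1$ (else $\mathcal{C}$ shatters a $(d+1)$-set), both inequalities are forced to be equalities, so $\mathcal{C}-b$ is $d$-maximum and $\mathcal{C}^b$ is $(d-1)$-maximum. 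I would then induct on $d$: the base $d=0$ gives $|\mathcal{C}|=1$ and $A=\emptyset$, and for $d\ge1$, if the chosen concept $\emptyset$ has a neighbour, say $\{b\}\in\mathcal{C}$, then $\emptyset\in\mathcal{C}^b$; applying the inductive hypothesis to the $(d-1)$-maximum class $\mathcal{C}^b$ yields $A'\in[B\setminus\{b\}]^{d-1}$ with $2^{A'}\subseteq\mathcal{C}^b$, and then $A:=A'\cup\{b\}$ satisfies $2^A\subseteq\mathcal{C}$, as required.

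Everything therefore reduces to the one genuinely hard point: the chosen concept $\emptyset$ always has a neighbour, equivalently, the one-inclusion graph of a $d$-maximum class with $d\ge1$ has no isolated vertex. This is where the maximum (not merely maximal) hypothesis must enter, and it is not a mere convenience: a $d$-cube through $\emptyset$ necessarily contains the singletons $\{a\}$ for $a\in A$, so ``$\emptyset$ has a neighbour'' is in fact equivalent to the conclusion being sought. I expect this to be the main obstacle. I would attack it by a second induction, on $|B|$: translating to $\emptyset$ and assuming no singleton lies in $\mathcal{C}$, if some coordinate $b$ lies in no two-element member of $\mathcal{C}$, then $\mathcal{C}-b$ is $d$-maximum, still contains $\emptyset$, still has no singletons, and lives on fewer points, contradicting the inductive hypothesis. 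The residual case, in which every coordinate lies in a two-element member while no singleton does, is the crux; I would rule it out using the uniqueness of forbidden labels on $(d+1)$-subsets (Propositions \ref{P:P2} and \ref{P:P3}) together with the tightness of Sauer's bound, which over-determines the induced traces. As a sanity guide, the case $d=1$ is transparent: there each coordinate labels exactly one edge of the one-inclusion graph, so the graph is acyclic, hence a tree, hence has no isolated vertex.

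Once no concept is isolated, the induction of the second paragraph runs without its bad case and produces the full $d$-cube through $c_p$, whose direction set $A\in[B]^d$ is exactly the $d$-element subset internally shattered by $p$. Translating back through the type-to-concept dictionary of the first paragraph completes the proof.
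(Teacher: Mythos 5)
Your reduction and scaffolding are fine, but the proof has a genuine gap exactly where you say you ``expect the main obstacle'': the claim that the one-inclusion graph of a finite $d$-maximum class ($d\geq 1$) has no isolated vertex is never proved, and everything rests on it. As you yourself observe, this claim is essentially equivalent to the lemma (a $d$-cube through $c_p$ yields neighbours of $c_p$, and conversely your induction on $d$ turns neighbours into cubes), so deferring it means the core of Welzl's theorem is missing. Your proposed attack --- induction on $|B|$, splitting on whether some coordinate lies in no two-element member --- correctly disposes of one case, but the residual case (every coordinate lies in a two-element member of $\mathcal{C}$, yet no singleton does) is precisely the hard part, and ``uniqueness of forbidden labels plus tightness of Sauer's bound over-determines the induced traces'' is a hope, not an argument: forbidden labels control the traces $\mathcal{C}(A\cup\{x\})$, i.e.\ which patterns are realized by \emph{some} concept, whereas isolation of a concept $c$ is a statement about which \emph{specific} sets $c\Delta\{x\}$ belong to $\mathcal{C}$, and nothing in your sketch bridges that difference. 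Note also that the paper itself does not prove this lemma; it quotes it (Welzl 1987; Theorem 10 of \cite{FlWa95}), so your attempt cannot be checked against an in-paper argument --- it has to stand on its own, and it does not yet.

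For what it is worth, the standard proof closes precisely this gap by a different case split, which avoids ever needing a separate ``no isolated vertex'' lemma. Induct on $|B|$ (for all $d$ simultaneously). Given $c$ and any $x\in B$: if $c\Delta\{x\}\in\mathcal{C}$, run your link argument on $\mathcal{C}^x$. If $c\Delta\{x\}\notin\mathcal{C}$, apply the inductive hypothesis to $\mathcal{C}-x$ to get a $d$-set $A\subseteq B\setminus\{x\}$ whose full cube through $c\setminus\{x\}$ lies in $\mathcal{C}-x$; then use the fact (which follows from your own second paragraph, iterated $d$ times) that the $d$-fold link $\mathcal{C}^A$ is $0$-maximum, hence $\mathcal{C}$ contains \emph{exactly one} cube with direction set $A$. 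All members of that cube agree at $x$; its projection to $B\setminus\{x\}$ is, by the same uniqueness applied to $\mathcal{C}-x$, the cube through $c\setminus\{x\}$; so the unique $A$-cube of $\mathcal{C}$ either contains $c$ (done) or contains $c\Delta\{x\}$, contradicting the case hypothesis. This ``unique cube per direction set, then project'' step is the idea your proposal is missing, and without it (or an equivalent) the lemma is not proved.
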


\begin{theorem}
 Suppose $\phi(\xbar;\ybar)$ is a $d$-*maximum partitioned formula.  Then $\phi(\xbar;\ybar)$ is UDTFS.
\end{theorem}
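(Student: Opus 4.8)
The plan is to recognize that the statement follows by chaining Lemma~\ref{L:L3} into Lemma~\ref{L:L2}, so that the only genuine work is lining up the hypotheses. First I would verify that the independence dimension of $\phi(\xbar;\ybar)$ is exactly $d$. By definition, $\phi$ being $d$-*maximum means $\mathcal{C}_{\phi^*}(\mcm)$ is $d$-maximum, and a $d$-maximum class has VC dimension $d$. Since the independence dimension of $\phi$ was defined to be $\text{VC}(\phi^*) = \text{VC}(\mathcal{C}_{\phi^*}(\mcm))$, this is precisely $d$.

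With that identification in hand, Lemma~\ref{L:L3} (Welzl) applies directly: for any finite $B \sse M^{|\ybar|}$ and any $p \in S_\phi(B)$, the type $p$ internally shatters some $A \in [B]^d$. But this is exactly the second hypothesis of Lemma~\ref{L:L2}, and the first hypothesis --- that the independence dimension equals $d$ --- was just established. Hence Lemma~\ref{L:L2} yields that $\phi(\xbar;\ybar)$ is UDTFS, with the explicit defining formulas $\psi_\eta(\ybar;\ybar_0,\ldots,\ybar_{d-1})$ constructed in its proof serving as the required witnesses.

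I do not expect any genuine obstacle at the level of this theorem itself: the entire combinatorial content has been pushed into Lemma~\ref{L:L3}, which supplies the internal-shattering property, and into the syntactic construction inside Lemma~\ref{L:L2}, which packages the $2^d$ consistency conditions collected in $\theta$ together with the ``$\ybar = \ybar_i$'' bookkeeping into a finite family of formulas indexed by $\eta : d \rightarrow 2$. The only point requiring a moment's care is the definitional observation that $d$-*maximum forces the independence dimension to equal $d$ rather than merely to be bounded by $d$; once that is noted, the two lemmas compose mechanically and the proof is essentially a citation of each.
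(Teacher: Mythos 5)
Your proof is correct and takes exactly the same route as the paper, whose entire proof is the citation ``By Lemmas \ref{L:L2} and \ref{L:L3}.'' Your additional observation --- that $d$-*maximum forces the independence dimension $\mathrm{VC}(\phi^*)$ to equal $d$ by definition, so the first hypothesis of Lemma \ref{L:L2} is satisfied --- is a correct and worthwhile spelling-out of a detail the paper leaves implicit.
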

\begin{proof}
 By Lemmas \ref{L:L2} and \ref{L:L3}.
\end{proof}

The following are open questions regarding UDTFS.  In each case the left to right direction is known to hold. A $L$-theory $T$ has UDTFS if every $L$-formula has UDTFS.
\begin{enumerate}
 \item  $\phi(\xbar;\ybar)$ is UDTFS iff $\mathcal{C}_{\phi^*}(\mcm)$ has an extended $d$-compression scheme for some $d$.
 \item Every reduct of a theory $T$ has UDTFS iff $T$ has UDTFS.
 \item $\phi(\xbar;\ybar)$ has UDTFS iff $\phi(\xbar;\ybar)$ has finite VC dimension.  
 \item $\mathcal{C} \sse 2^X$ has a compression scheme of order $d$ iff $\mathcal{C}$ has finite VC dimension $d$.  

\end{enumerate}

Question (3) is the NIP $\iff$ UDTFS conjecture discussed in \cite{JoLa10, Gu11}.  Question (4) is one of the principal unsolved problems in computational learning theory \cite{Wa03}. 

\begin{proposition}
 (1) $\iff$ (2)
\end{proposition}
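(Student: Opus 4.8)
The plan is to exploit one structural asymmetry between the two properties appearing in statement (1). Whether $\mathcal{C}_{\phi^*}(\mcm)$ has an extended $d$-compression scheme is a property of the \emph{set system} $\mathcal{C}_{\phi^*}(\mcm)$ alone: the compression map $\kappa$ and the reconstruction functions in $\RR$ are arbitrary set maps that make no reference to any language. UDTFS, by contrast, is a priori a property of $\phi$ \emph{together with} its ambient language, since the witnessing formulas $\psi_l(\ybar;\ybar_0,\ldots,\ybar_{n-1})$ must be drawn from that language. Consequently, if $L_0 \subseteq L$ and $\phi$ is an $L_0$-formula, then $\mathcal{C}_{\phi^*}(\mcm\upharpoonright L_0) = \mathcal{C}_{\phi^*}(\mcm)$ as families of subsets of $M^{|\ybar|}$, so ``has an extended $d$-compression scheme'' is invariant under passage to reducts, whereas UDTFS need not obviously be. This single observation drives both implications.

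For $(1)\Rightarrow(2)$ the left-to-right half of (2) is immediate, since $T$ is a reduct of itself. For the content of (2), assume $T$ has UDTFS, let $T_0$ be a reduct in a language $L_0\subseteq L$, and let $\phi$ be any $L_0$-formula. Then $\phi$ is in particular an $L$-formula, so it has UDTFS in $T$; by the known left-to-right half of (1), $\mathcal{C}_{\phi^*}(\mcm)$ has an extended $d$-compression scheme. By the reduct-invariance noted above, the very same scheme witnesses that $\mathcal{C}_{\phi^*}(\mcm\upharpoonright L_0)$ has an extended $d$-compression scheme. Now apply the assumed statement (1) \emph{inside} $T_0$: compression of $\mathcal{C}_{\phi^*}$ yields that $\phi$ has UDTFS using $L_0$-formulas. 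As $\phi$ was arbitrary, $T_0$ has UDTFS, which is exactly (2).

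For $(2)\Rightarrow(1)$ only the right-to-left content of (1) is at issue, namely that an extended $d$-compression scheme for $\mathcal{C}_{\phi^*}(\mcm)$ forces UDTFS for $\phi$. Given such a $\phi$ in $L$, fix the finite reconstruction set $\RR=\{\rho_0,\ldots,\rho_{k-1}\}$. The plan is to make the reconstruction definable in a controlled expansion and then pull UDTFS back down by (2). First pass to the minimal language $L_\ast=\{R\}$, interpreting $R$ as $\phi$, so that $\mathcal{C}_{R^*}=\mathcal{C}_{\phi^*}$ and any UDTFS scheme for $R$ in a language containing $R$ re-expresses, via the identification $R\equiv\phi$, as a UDTFS scheme for $\phi$ in $L$. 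Next expand to $L_\ast^+=\{R,S_0,\ldots,S_{k-1}\}$, where each new relation $S_j(\ybar;\ybar_0,\ldots,\ybar_{d-1})$ names the graph of $\rho_j$ (it holds exactly when $\ybar$ lies in the set reconstructed by $\rho_j$ from the parameters $\ybar_0,\ldots,\ybar_{d-1}$). In the resulting structure $\mcm^+$ the scheme becomes first-order: taking $\psi_j:=S_j$, every $p\in S_\phi(B)$ is defined over $B$ by one of the $\psi_j$, so $R$ has UDTFS in $\mcm^+$. If $\mathrm{Th}(\mcm^+)$ has UDTFS, then by (2) its reduct to $\{R\}$ has UDTFS, hence $R$, hence $\phi$, does — which is (1).

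The hard part is precisely the hypothesis just invoked. Statement (2) transfers UDTFS from a \emph{whole} theory to its reducts, not from a single formula, so the argument is complete only once one knows that $\mathrm{Th}(\mcm^+)$ has UDTFS on \emph{all} of its formulas, not merely on $R$. Thus the real obstacle is to make the expansion minimal enough — adding only the reconstruction predicates over the bare incidence structure of $\mathcal{C}_{\phi^*}(\mcm)$ — and to verify that this expansion remains tame, i.e.\ that every Boolean combination and quantification over $R$ and the $S_j$ again has UDTFS. Here one expects to use that a family admitting an extended $d$-compression has finite VC dimension and so is NIP, so that the expansion stays NIP and its finitely many new predicates, being read off from the compression of $\mathcal{C}_{\phi^*}(\mcm)$ itself, create no new independence. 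Establishing that this controlled expansion genuinely has full UDTFS is the crux of the $(2)\Rightarrow(1)$ direction.
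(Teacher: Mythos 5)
Your argument is the paper's argument in both directions. The $(1)\Rightarrow(2)$ half is essentially identical to the paper's, and in $(2)\Rightarrow(1)$ your expansion $\mcm^+$ by predicates naming the reconstruction functions is exactly the paper's construction of $L^\RR$ and $\mcm^\RR$, including taking the defining formulas $\psi_j$ to be the new predicates themselves. The only divergence is at the final step, which you leave open. There the paper simply applies (2): $\phi$ is UDTFS in $Th(\mcm^\RR)$, hence, by (2), UDTFS in the reduct $T$. You are right that this application is licensed only by a \emph{formula-level} reading of (2) --- ``if $\phi \in L' \sse L$ and $\phi$ is UDTFS in $T$, then $\phi$ is UDTFS in $T \upharpoonright L'$'' --- and not by the literal theory-level statement, since nothing guarantees that \emph{every} $L^\RR$-formula is UDTFS in $Th(\mcm^\RR)$. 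So you have spotted a genuine imprecision. But its resolution is interpretive rather than mathematical: the paper's (and your) proof of $(1)\Rightarrow(2)$ in fact establishes precisely this formula-level transfer (it fixes a single $\phi$, assumes only that $\phi$ is UDTFS in $T$, and concludes $\phi$ is UDTFS in $T'$), and it is this statement that the converse direction consumes. Read (2) that way --- evidently the intended reading, being both what the forward proof delivers and what the backward proof needs --- and your $\mcm^+$ argument closes immediately: $R$ is UDTFS in $Th(\mcm^+)$, $R$ lies in the reduct language, hence $R$ (i.e.\ $\phi$) is UDTFS in the reduct.

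Two further points. First, the repair you sketch for the theory-level reading --- show the expansion is NIP, argue the new predicates ``create no new independence,'' and conclude full UDTFS --- cannot work with current knowledge: NIP implies UDTFS is precisely open question (3) of the same list, so you would be invoking something at least as strong as what is open. Worse, the reconstruction functions $\rho$ are arbitrary set maps constrained only on compressions of traces of $\mathcal{C}_{\phi^*}(\mcm)$, so the parameterized families they define need not even be NIP; there is no apparent way to make $Th(\mcm^+)$ provably UDTFS as a whole theory, which is exactly why the formula-level reading is the right one. Second, your intermediate pass to the minimal language $L_* = \{R\}$ is unnecessary: one can expand $L$ itself by the predicates $\rho'$, as the paper does, and the formula-level transfer takes $\phi$ UDTFS in $Th(\mcm^\RR)$ directly down to $T$.
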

\begin{proof}
 Let $\phi(\xbar;\ybar)$ be given and suppose (1) holds, viz that every $\phi(\xbar;\ybar)$ for which $\mathcal{C}_{\phi^*}(\mcm)$ has an extended compression scheme is UDTFS.  Suppose $T$ is an $L$-theory in which $\phi(\xbar;\ybar)$ is UDTFS.  Let $T'$ be a reduct of $T$ to $L' \sse L$ such that $\phi \in L'$.  We will show that $\phi(\xbar;\ybar)$ is UDTFS in $T'$.  It is easy to see that $\phi(\xbar;\ybar)$ UDTFS in $T$ implies that when $\mcm \models T$, $\mathcal{C}_{\phi^*}(\mcm)$ has an extended compression scheme.  Let $\mcm'$ be an $L'$ reduct of $\mcm$.  Then $\mathcal{C}_{\phi^*}(\mcm) = \mathcal{C}_{\phi^*}(\mcm')$, and so $\mathcal{C}_{\phi^*}(\mcm')$ has an extended compression scheme.  But then by (1), $\phi(\xbar;\ybar)$ is UDTFS in $T'$.

Now conversely suppose that (2) holds, viz that if $T$ is UDTFS in $L$ then the $L'$ reduct $T'$ is UDTFS.  Let $\phi(\xbar;\ybar)$ be given and suppose $\mathcal{C}_{\phi^*}(\mcm)$ has an extended compression scheme for an $L$ model $\mcm$ of $T$.  We must show that $\phi(\xbar;\ybar)$ is UDTFS in $T$.  Without loss of generality, $\mathcal{C}_{\phi^*}(\mcm)$ has an extended sequence compression scheme, with reconstruction functions $\RR$.  For each $\rho(\ybar_0,\ldots,\ybar_{d-1}) \in \RR$, add a predicate $\rho'(\ybar;\ybar_0,\ldots,\ybar_{d-1})$ to $L$ to get a language $L^\RR$.  Let $\mcm^\RR$ be an $L^\RR$ expansion of $\mcm$ such that for all $\abar,\abar_0,\ldots,\abar_{d-1}\in {M^\RR}^{|\ybar|}$, $$\rho'(\abar,\abar_0,\ldots,\abar_{d-1})^{\mcm^\RR} \iff \rho(\abar_0,\ldots,\abar_{d-1})(\abar)=1$$Then $\phi(\xbar;\ybar)$ is UDTFS in $Th(\mcm^\RR)$.  By (2), it is also UDTFS in the $L$ reduct, $T$.
\end{proof}

Clearly if either (1) or (2) holds, then (3) and (4) are qualitatively the same question.

\section{Conclusions}


The following are typical examples of maximum VC classes:  
\begin{enumerate}
 \item  $[X]^{\leq n}$
 \item Unions of boundedly many intervals in $\R$
 \item Sets of positivity for a finite dimensional real vector space of real valued functions, restricted to points in general position.
\end{enumerate}

Note that the second example is a special case of the third.  We have shown that all stable maximum families are essentially of the first type.  It is unclear whether there are similar ``universal'' set systems for unstable maximum families, or if the above unstable examples are  the only possibilities.  Some work on this problem is done in \cite{BeLi98}.  Results on geometric characterizations of maximum families can be found in \cite{RuRu09}, as well as an algorithm for generating all finite maximum families. 

It has been remarked that there is a curious absence of natural examples of maximal but not maximum classes \cite{Fl89}.  At the same time, it seems that ``most'' of the wild (ie. random) maximal classes are not maximum \cite{Fl89,FlWa95}. 

On this same topic, the property of being maximal (in particular, closed in the Tychonoff topology) implies a strong condition on the type space which will not always be possible.  A model $\mcn$ with a binary relation $R$ whose type space $\gc_R(\mcn)^\mcn$ is closed in (ie equal to) $\gc_R(\mcn)$ must be more than saturated--it must realize all types over itself.  Therefore, no equivalence relation with infinitely many classes will ever be closed (because of the ``not equivalent to anything'' type). In fact a Dedekind complete order is a rare example of something natural which is maximal. The notion of a closed relation is similar to algebraic compactness in the theory of modules, where a pure injective module must realize all positive primitive formulas over itself.  That condition is relatively easy to realize, however, since negative instances are not considered.


If a definable family embeds in a maximum class, it inherits some good properties, such as the existence of a compression scheme.  In dimension 1 in any (weakly) o-minimal or strongly minimal theory, all definable families are sub-families of maximum definable families.  We may ask how many of the good properties of such theories are related to this fact. 



\bibliographystyle{plain}
 \bibliography{refs}

\end{document}